\DeclareSymbolFontAlphabet{\mathbbm}{bbold}
\DeclareSymbolFontAlphabet{\mathbb}{AMSb}%
\definecolor{imperialBlue}{RGB}{0, 62, 116}
\definecolor{imperialBrick}{RGB}{165,25,0}
\definecolor{imperialProcess}{RGB}{0,133,202}
\definecolor{imperialGreen}{RGB}{2,137,59}
\definecolor{imperialRed}{RGB}{221,37,1}
\definecolor{imperialOrange}{RGB}{210,64,0}
\definecolor{imperialBlue2}{RGB}{0,110,175}
\definecolor{imperialTangerine}{RGB}{236,115,0}
\definecolor{imperialPurple}{RGB}{101,48,152}
\definecolor{imperialLime}{RGB}{196,214,0}
\definecolor{imperialKermit}{RGB}{102,164,10}
\newtheorem{theorem}{Theorem}[section]
\newtheorem{corollary}[theorem]{Corollary}
\newtheorem{lemma}[theorem]{Lemma}
\newtheorem{proposition}[theorem]{Proposition}
\newtheorem{remark}[theorem]{Remark}
\theoremstyle{definition}
\newtheorem{definition}[theorem]{Definition}
\newtheorem{notation}[theorem]{Notation}
\newmdtheoremenv[
hidealllines=true,
leftline=true,
innertopmargin=0pt,
innerbottommargin=0pt,
linewidth=4pt,
linecolor=gray!40,
innerrightmargin=0pt,
innertopmargin=-6pt,
]{example}{Example}[section]
\newcommand{\C}{\mathbb{C}}
\newcommand{\Ex}{\mathbb{E}}
\newcommand{\N}{\mathbb{N}}
\renewcommand{\P}{\mathbb{P}}
\newcommand{\R}{\mathbb{R}}
\renewcommand{\S}{\mathbb{S}}
\renewcommand{\AA}{\mathcal{A}}
\newcommand{\CC}{\mathcal{C}}
\newcommand{\DD}{\mathcal{D}}
\newcommand{\FF}{\mathcal{F}}
\newcommand{\GG}{\mathcal{G}}
\newcommand{\MM}{\mathcal{M}}
\newcommand{\OO}{\mathcal{O}}
\newcommand{\PP}{\mathcal{P}}
\renewcommand{\SS}{\mathcal{S}}
\newcommand{\TT}{\mathcal{T}}
\newcommand{\UU}{\mathcal{U}}
\newcommand{\WW}{\mathcal{W}}
\newcommand{\XX}{\mathcal{X}}
\DeclarePairedDelimiterX{\normop}[1]{\lVert}{\rVert_{\mathrm{op}}}{#1}
\DeclarePairedDelimiterX{\normopp}[1]{\lVert}{\rVert_{\mathrm{op}}^p}{#1}
\DeclarePairedDelimiterX{\normhs}[1]{\lVert}{\rVert_{\mathrm{HS}}}{#1}
\newcommand{\tr}[1]{\mathrm{tr} \Big(#1\Big)}
\newcommand{\NC}[1]{\mathrm{NC}_2\big(#1\big)}
\newcommand{\wt}[1]{\mathrm{wt} (#1)}
\newcommand{\id}{\mathbbm{1}}
\def\upintkern@{\mkern-7mu\mathchoice{\mkern-3.5mu}{}{}{}}
\def\upintdots@{\mathchoice{\mkern-4mu\@cdots\mkern-4mu}%
	{{\cdotp}\mkern1.5mu{\cdotp}\mkern1.5mu{\cdotp}}%
	{{\cdotp}\mkern1mu{\cdotp}\mkern1mu{\cdotp}}%
	{{\cdotp}\mkern1mu{\cdotp}\mkern1mu{\cdotp}}}
\newcommand{\UpMultiIntegral}[1]{%
	\edef\ints@c{\noexpand\upintop
		\ifnum#1=\z@\noexpand\upintdots@\else\noexpand\upintkern@\fi
		\ifnum#1>\tw@\noexpand\upintop\noexpand\upintkern@\fi
		\ifnum#1>\thr@@\noexpand\upintop\noexpand\upintkern@\fi
		\noexpand\upintop
		\noexpand\ilimits@
	}%
	\futurelet\@let@token\ints@a
}
\DeclareFontFamily{OMX}{mdbch}{}
\DeclareFontShape{OMX}{mdbch}{m}{n}{ <->s * [0.8]  mdbchr7v }{}
\DeclareFontShape{OMX}{mdbch}{b}{n}{ <->s * [0.8]  mdbchb7v }{}
\DeclareFontShape{OMX}{mdbch}{bx}{n}{<->ssub * mdbch/b/n}{}
\DeclareSymbolFont{uplargesymbols}{OMX}{mdbch}{m}{n}
\DeclareMathSymbol{\upintop}{\mathop}{uplargesymbols}{82}
\DeclareMathSymbol{\upointop}{\mathop}{uplargesymbols}{"48}
\DeclareFontFamily{MDB}{mdbch}{}
\DeclareFontShape{MDB}{mdbch}{m}{n}{ <->s * [0.8]  mdbchrmb }{}
\DeclareFontShape{MDB}{mdbch}{b}{n}{ <->s * [0.8]  mdbchbmb }{}
\DeclareFontShape{MDB}{mdbch}{bx}{n}{<->ssub * mdbch/b/n}{}
\DeclareSymbolFont{mathdesignB}{MDB}{mdbch}{m}{n}%
\DeclareMathSymbol{\upintclockwise}{\mathop}{mathdesignB}{128}
\DeclareMathSymbol{\upointclockwise}{\mathop}{mathdesignB}{130}
\DeclareMathSymbol{\upointctrclockwise}{\mathop}{mathdesignB}{132}
\DeclareMathSymbol{\upoiint}{\mathop}{mathdesignB}{134}
\DeclareMathSymbol{\upoiiint}{\mathop}{mathdesignB}{136}
\newcommand{\upint}{\DOTSI\upintop\ilimits@}
\newcommand{\upoint}{\DOTSI\upointop\ilimits@}
\renewcommand{\int}{\upint}
  \def\TreeOneTwoTwo#1#2#3#4#5{\tikz[baseline = -0.3ex]{
		\draw[fill] (0,0) circle [radius=0.06];
		\draw[fill] (-0.2,0.35) circle [radius=0.06];
		\draw[fill] (0.2,0.35) circle [radius=0.06];
            \draw[fill] (-0.4,0.7) circle [radius=0.06];
            \draw[fill] (0,0.7) circle [radius=0.06];
		\draw (0,0) -- (-0.2,0.35);
		\draw (0,0) -- (0.2,0.35);
            \draw (-0.2,0.35) -- (-0.4, 0.7);
            \draw (-0.2,0.35) -- (0, 0.7);
		\node[below] at (0,0) {$\scriptstyle{#1}$};
            \node[left] at (-0.2,0.35) {$\scriptstyle{#2}$};
            \node[left] at (-0.4,0.7) {$\scriptstyle{#3}$};
            \node[right] at (0,0.7) {$\scriptstyle{#4}$};
            \node[right] at (0.2,0.35) {$\scriptstyle{#5}$};
  }
  }
    \def\SmallTreeOneOne#1#2{\tikz[baseline = -0.3ex]{
		\draw[fill] (0,0) circle [radius=0.06];
		\draw[fill] (0,0.3) circle [radius=0.06];
		\draw (0,0) -- (0,0.3);
		\node[right] at (0,0) {$\scriptstyle{#1}$};
            \node[right] at (0,0.3) {$\scriptstyle{#2}$};
  }
  }
  \def\TreeOneThreeTwoOne#1#2#3#4#5#6#7{\tikz[baseline = -0.3ex]{
		\draw[fill] (0,0) circle [radius=0.06];
		\draw[fill] (-0.35,0.4) circle [radius=0.06];
            \draw[fill] (0,0.4) circle [radius=0.06];
		\draw[fill] (0.35,0.4) circle [radius=0.06];
            \draw[fill] (0,0.8) circle [radius=0.06];
            \draw[fill] (0.7,0.8) circle [radius=0.06];
            \draw[fill] (0.7,1.2) circle [radius=0.06];
		\draw[imperialGreen] (0,0) -- (-0.35,0.4);
		\draw[imperialBlue] (0,0) -- (0,0.4);
            \draw[imperialRed] (0,0) -- (0.35,0.4);
            \draw[imperialGreen] (0.35,0.4) -- (0,0.8);
            \draw[imperialBlue] (0.35,0.4) -- (0.7,0.8);
            \draw[imperialGreen] (0.7,0.8) -- (0.7,1.2);
		\node[below] at (0,0) {$\scriptstyle{#1}$};
            \node[left] at (-0.35,0.4) {$\scriptstyle{#2}$};
            \node[left] at (0.05,0.4) {$\scriptstyle{#3}$};
            \node[right] at (0.35,0.4) {$\scriptstyle{#4}$};
            \node[left] at (0,0.8) {$\scriptstyle{#5}$};
            \node[right] at (0.7,0.8) {$\scriptstyle{#6}$};
            \node[right] at (0.7,1.2) {$\scriptstyle{#7}$};}}
\title{\textsc{Free probability, path developments and signature kernels as universal scaling limits}}
\author{Thomas Cass\textsuperscript{1,2}, William F. Turner\textsuperscript{1}}
\affil{\small\textsuperscript{1} Department of Mathematics, Imperial College London\\ \textsuperscript{2} Institute for Advanced Study, United States of America}
\date{\today}
\begin{document}

\maketitle

\begin{abstract}
    Random developments of a path into a matrix Lie group $G_N$ have recently been used to construct signature-based kernels on path space. Two examples include developments into GL$(N;\R)$ and $U(N;\C)$, the general linear and unitary groups of dimension $N$. For the former, \cite{SKlimit} showed that the signature kernel is obtained via a scaling limit of developments with Gaussian vector fields. The second instance was used in \cite{PCFGAN} to construct a metric between probability measures on path space. We present a unified treatment to obtaining large $N$ limits by leveraging the tools of free probability theory. An important conclusion is that the limiting kernels, while dependent on the choice of Lie group, are nonetheless universal limits with respect to how the development map is randomised. For unitary developments, the limiting kernel is given by the contraction of a signature against the monomials of freely independent semicircular random variables. Using the Schwinger-Dyson equations, we show that this kernel can be obtained by solving a novel quadratic functional equation. We provide a convergent numerical scheme for this equation, together with rates, which does not require computation of signatures themselves.
\end{abstract}

\section{Introduction}
In recent years the signature transform \cite{Chen54, Lyons_SF} has been used in a variety of applications, both as a way of vectorising sequential data, often as a step inside some machine learning pipeline \cite{LLN, Handwriting, DeepSig, SigSkel, ImageSIG}, and also as a tool to discriminate between probability measures on path space \cite{CO, SigWasserstein}.

The signature, which we will
denote by $\SS(\gamma)$, of a continuous path of bounded variation $\gamma$ in a vector space $V$ over an interval $[a,b]$ may be defined in a multitude of ways. One such approach is to define $\SS(\gamma)  :=z_{b},$ where $z$ solves the controlled differential equation
\begin{equation}\label{eq: sig}
\dif z_{t}=z_{t}\otimes \dif\gamma_t\text{ on }\left[  a,b\right]  \text{
}
\end{equation}
in the space $T((V)),$ the algebra of formal
tensor series over $V,$ with the initial condition $z_{a}=\mathbf{e,}$ the
identity in $T((V)).$ The structure of \cref{eq: sig} bears resemblance to the classical operation of
exponentiation, and the signature transform itself can loosely be interpreted
as an analogue of this operation with respect to the non-commutative product
$\otimes$. By selecting a basis for $V$, the signature can be described by a
collection of coefficients which determine the constituent tensors of
$\SS(\gamma)$. A key insight is that the ensemble of these
coefficients defines an algebra of functions on path space. They thus
form a basis in which functional relationships can be described, the canonical example of which being the map $\left(  y_{a},\gamma\right)
\mathbf{\mapsto}y,$ where $y$ solves the CDE%
\[
\dif y_{t}=V\left(  y_{t}\right)  \dif\gamma_{t}\,\text{started at }y_{a}.
\]
The solution to the specific \cref{eq: sig} can therefore encapsulate a
wide class of mappings of the form $\left(  y_{a},\gamma\right)
\mathbf{\mapsto}y.$ The signature allows one to reduce the analysis of paths $\gamma:\left[
a,b\right]  \rightarrow V$ to the analysis of a collection of tensors in $%
{\textstyle\prod\nolimits_{m=0}^{\infty}}
V^{\otimes m}$ associated with the signature of $\gamma$. The dimension of the
spaces $V^{\otimes m}$ increases exponentially in $m$, so that considerations
of dimensionality remain relevant. A commonly-taken approach is to truncate
the signature so that one deals instead with tensors in the more
computationally-manageable space ${\textstyle\prod\nolimits_{m=0}^{L}}V^{\otimes m}$. In other contexts, signature kernel methods have become
popular \cite{KO, SigPDE, issa2023nonadversarial, WSK}. These allow for efficient computation of inner-products of the form%
\[
K(\gamma,\sigma)\coloneqq \langle \SS(\gamma),\SS(\sigma)\rangle,%
\]
for various choices of inner product $\langle\cdot,\cdot\rangle$ on $T((V))$, without the need for computing $\SS(\gamma)$ directly.

One promising recent alternative to these ideas is a proposal to use the development of $\gamma$ into a matrix Lie group  \cite{lou2022path, PCFGAN}. In this formulation, the central
object becomes not the signature but related representations obtained by the
evolution
\begin{equation}\label{eq: dev 1}
\dif Z_{t}=M\left(  \dif\gamma_{t}\right)  \left(  Z_{t}\right)\,,Z_{a}%
=I_N,%
\end{equation}
in which $Z$ belongs to a matrix Lie group $G_{N}$, a subset of $\mathcal{M}%
_{N}\left(  F\right)  $, the space of $N\times N$ matrices over a field $F$,
and $M$ is a map in Hom$\left(  V,\text{Lie}\left(  G_{N}\right)  \right)  ,$
Lie$\left(  G_{N}\right)  $ denoting the space of of left-invariant vector
fields on $G_{N}.$ By the canonical identification of the Lie algebra
$\mathfrak{g}_{N}$ with Lie$\left(  G_{N}\right)  $ we can equivalently regard
$M$ as belonging to Hom$\left(  V,\mathfrak{g}_{N}\right)  $ and instead write
\cref{eq: dev 1} as
\begin{equation}\label{eq: dev 2}
\dif Z_{t}=Z_{t}\cdot M(\dif\gamma_{t})\,,Z_{a}=I_N%
\end{equation}
in which $\cdot$ denotes matrix multiplication. Important examples of this
framework include:
\begin{enumerate}[label=\arabic*)]
\item $F=\R\text{ or }\C$, and $G_{N}=$GL$\left(N;F\right)  $ the general
linear group for which $\mathfrak{g}_{N}=$ $\mathcal{M}_{N}\left(
F\right)$. The resulting features are denoted by $\GG(\gamma;M):=Z_b$.

\item $F=\mathbb{C}$, and $G_{N}=$U$\left(  N;\mathbb{C}\right)  $ the unitary group, whose corresponding Lie algebra is the set of anti-Hermitian matrices $\mathfrak{u}_{N}:=$ $\left\{  M\in\mathcal{M}_{N}\left(
\mathbb{C}\right)  :U^{\ast}=-U\right\}$. We denote the resulting unitary
features by $\mathcal{U}\left( \gamma;M\right)  :=Z_{b}.$
\end{enumerate}
A special case of the second example is when $N=1$, where the first unitary group is the unit circle
$U\left(  1;\C\right)  \cong\mathbb{T}:=\left\{  z\in%
\mathbb{C}
:\left\vert z\right\vert =1\right\},$ with complex
multiplication. In this case if $M$ is assumed to have the form $M=i\cdot f$
where $i=\sqrt{-1}$ and $f$ is in the dual space $V^{\ast}$ of the (real)
vector space $V,$ then we have that%
\[
Z_{b}=\exp\left(  if\left(  \gamma_{a,b}\right)  \right)  \text{ with }%
\gamma_{a,b}:=\gamma_{b}-\gamma_{a}.
\]
In particular if $\gamma_{a,b}$ is a random variable on $V$ with law $\mu$ then
$\mathbb{E}_{\mu}\left[  Z_{b}\right]  =\Phi_{\mu}\left(  f\right),$ the
classical characteristic function of $\mu$ evaluated at $f$. The development \cref{eq: dev 2} in $U(N;\C)$, was  also studied for general $N$ in the context of characterising probability measures on path space in \cite{CL}. These observations prompt Lou, Li and Ni in \cite{PCFGAN} to introduce a notion of distance between probability measures on path space first by defining
\[
\Phi_{\mu}(M):=\Ex_{\gamma\sim\mu}\big[\UU(\gamma; M)\big]\quad\text{and}\quad d_{M}\left(  \mu,\nu\right)  :=
\norm{\Phi_{\mu}(M) -\Phi_{\nu}(M)}_{\text{HS}}
\]
and then randomising the choice of development and setting%
\begin{equation}\label{eq: dist}
d_{\xi}\left(  \mu,\nu\right)  =\mathbb{E}_{M\sim\xi}\left[
d_{M}\left(  \mu,\nu\right)^2 \right]^{\frac{1}{2}}.%
\end{equation}
We remark that when when $N=1$, $d_\xi$ reduces to the characteristic function distance introduced in \cite{Ansari2019, CFGAN}. In the experiments of \cite{PCFGAN} the measure $\xi$ is
assumed to have the form $\xi=\frac{1}{k}\sum_{i=1}^{k}\delta_{M_{i}}$. The
collection of linear maps $\left\{  M_{1},...,M_{k}\right\}$ defining the
support of this measure are then treated a set of trainable parameters. The
function $d_{\xi}$ is used as a so-called discriminator, which is
then implemented within a generative adverserial network (GAN). The support of
$\xi$ is chosen to maximise the distance between a target distribution and a
proposed method of generating samples from the target distribution.
Results in the paper \cite{PCFGAN} assert state-of-the-art performance
in being able to generate time series samples arising from certain distributions.

Despite these promising results, questions remain which affect the wider use
of this approach. The critical issue of dimensionality outlined above, for
example, is transferred -- rather than resolved -- from the selection of a
truncation parameter in the signature, to the choice of a Lie group of
sufficiently high dimension for the method to be effective. The
overhead associated with computing $d_\xi$, as well as the difficulty of optimising the large parameter set forming
the discriminator remains and may, in some cases, be prohibitive. It is
desirable therefore to have some direct means of computing \cref{eq: dist},
especially in the case where $N$ may be large.

One way to tackle this problem is to observe \cite[Section B.3]{PCFGAN}, that $d_\xi$ is a maximum mean discrepancy distance (MMD). Indeed
\begin{equation*}
    d^2_{\xi}\left(  \mu,\nu\right)=\Ex_{M\sim\xi}\big[\langle\Phi_{\mu}(M),\Phi_{\mu}(M)\rangle\big]+\Ex_{M\sim\xi}\big[\langle\Phi_{\nu}(M),\Phi_{\nu}(M)\rangle\big]-2\Ex_{M\sim\xi}\big[\langle\Phi_{\mu}(M),\Phi_{\nu}(M)\rangle\big],
\end{equation*}
so that $d_\xi$ is an MMD with associated kernel
\[
\kappa(\gamma,\sigma):=\Ex_{M\sim\xi}\big[\langle \UU(\gamma;M),\UU(\sigma;M)\rangle_{\text{HS}}\big].
\]
As our interest is primarily in the asymptotic regime $N\rightarrow\infty$,
this leads us to ask whether we can characterise the large $N$ behaviour of
the law of the random variables
\begin{equation}\label{eq: rv}
\langle \mathcal{U}\left(  \gamma;\cdot\right)  ,\mathcal{U}\left(  \sigma;\cdot\right)  \rangle _{\text{HS}}\text{ under }\xi_{N}%
\end{equation}
for a suitable class of (Borel) probability measures $\xi_{N}$ on
$\mathfrak{u}_N$. While this may appear a
challenging technical problem, the theory of non-commutative probability \cite{VoiOrig, VoiMult, VoiMatProd} in fact provides a general framework both for the study of these limits, as well as giving explicit ways to characterise and calculate them. We recall that a non-commutative $C^\ast$-probability space is a pair $\left(
\AA,\phi\right)$ consisting of a $C^\ast$-algebra $\AA$ having a unit $\mathbf{e}$, and a
state (in the sense of \cite[Definition 5.2.0]{RMintro}) $\phi:\AA\rightarrow\mathbb{C}$ with $\phi\left(  \mathbf{e}\right)
=1$. Two well-known examples are

\begin{enumerate}[label=\arabic*)]
\item $\AA_{1}=$ $\mathcal{M}_{N}\left(  \mathbb{C}\right)$, and $\phi\left(
M\right)  =\frac{1}{N}$tr$\left(  M\right)$, .

\item $\AA_{2}=L^{\infty}\left(  \Omega;\mathcal{M}_{N}\left(  \mathbb{C}%
\right)  \right)$, the space of bounded $\mathcal{M}_{N}\left(
\mathbb{C}\right)  $-valued random variables, with the linear-functional $\phi\left(  M\right)
=\frac{1}{N}\mathbb{E}\left[  \text{tr}\left(  M\right)  \right]  $.
\end{enumerate}

One attractive feature of this theory is the way in which it can be used to
identify limits, both almost surely and in expectation, of sequences such as
\cref{eq: rv}. These limits are typically determined by the dependencies between the matrix entries but are otherwise universal. They do not restrict
$\xi_{N}$ to belong to a class of nicely-described distributions, in
particular they need not be Gaussian, and need instead only soft moment
assumptions. The archetype of this result shows that sequences (of
collections) of random Wigner matrices $\mathcal{L}_N:=\Big\{  \frac
{1}{\sqrt{N}}M_{i}^{N}\Big\}  _{i=1}^{d}\in\mathcal{M}_{N}\left(
\mathbb{C}\right)  $, under very general assumptions, are asymptotically free
and converge in law (in the sense of non-commutative probability) to the
semicircle law. These statements hold both in the almost sure sense, viewing
$\mathcal{L}_{N}$ as a random set in $\big(  \mathcal{M}_{N}(
\mathbb{C})  \text{,}\frac{1}{N}\text{tr}(  \cdot)  \big)  ,$ and
in expectation, by working instead with $\phi\left(  \cdot\right)  =\frac
{1}{N}\mathbb{E}\left[  \text{tr}\left(  \cdot\right)  \right]$.

By using techniques from this theory we are able to define the limit%
\[
K_{\text{SD}}\,\left(  \gamma,\sigma\right):=\lim_{N\to
\infty}\frac{1}{N}\mathbb{E}_{M\sim\xi_{N}}\left[  \langle \mathcal{U}\left(
\gamma;M\right)  ,\mathcal{U}\left(  \sigma;M\right)  \rangle
_{\text{HS}}\right]  ,
\]
and moreover understand $K_{\text{SD}}\,\left(  \gamma,\sigma\right)  $ in
terms of the signature of the path $y=\gamma\ast\overleftarrow{\sigma}$%
\begin{equation}\label{SD}
K_{\text{SD}}\,\left(  \gamma,\sigma\right)  =\sum_{k=0}^{\infty}\left(
-1\right)  ^{k}\sum_{\left\vert \bm{I}\right\vert =k}\varphi\left(  X_{\bm{I}}\right)
\SS^{\bm{I}}\left(  y\right) ,%
\end{equation}
where $\left\{  X_{1},...,X_{d}\right\}  $ are free semicircular random
variables with $\varphi\left(  X_{\bm{I}}\right)  $ the non-commutative moment of the
monomial $X_{\bm{I}_{1}}...X_{\bm{I}_{k}}.$ To make practical use of this result, we
need some means of computing \cref{SD} that avoids the direct computation of
signatures, by analogy to the way the signature kernel PDE can be solved to
determine the signature kernel \cite{SigPDE}. The structure of free semicircular variables
offers a way to do this. One way to characterise their properties is through
the \textit{Schwinger-Dyson equations }(hence $K_{\text{SD}}$). These
equations, which originate in quantum field theory, have an important
realisation in the context of non-commutative probability. We use this
structure to prove that $K_{\text{SD}}\,\left(  \gamma,\sigma\right)
=K_{\text{SD}}^{y}\left(  a,b\right)  $, where $y=\gamma\ast\overleftarrow{\sigma}$ and $K_{\text{SD}}^{y}:\left\{
a\leq s\leq t\leq b\right\}  \rightarrow%
\mathbb{R}
$ solves the quadratic functional equation
\begin{equation}\label{eq: SD func}
\begin{split}
    K_{\text{SD}}^{y}\left(  s,t\right)   & =1-\int_{a}^{t}\int_{a}^{s}%
K_{\text{SD}}^{y}\left(  a,u\right)  K_{\text{SD}}^{y}\left(  u,r\right)
\langle \dif y_{u},\dif y_{r}\rangle ,\text{ }\\
K_{\text{SD}}^{y}\left(  s,s\right)   & =1\text{ for all }s\in\left[
a,b\right].
\end{split}
\end{equation}
We show that this equation, which appears not to have been previously studied,
has a unique solution. We then propose and implement a numerical scheme to
solve \cref{eq: SD func} and establish convergence of the scheme to the true
solution together with convergence rates.

As a further application of the use of these methods we consider the evolution
\cref{eq: dev 2} in $G_{N}=$GL$\left(N;\mathbb{C}\right)$ the general linear group. The corresponding limit under these
assumptions is the classical signature kernel:%

\[
K_{\text{sig}}\left(  \gamma,\sigma\right)  =\lim_{N\rightarrow
\infty}\frac{1}{N}\mathbb{E}_{M\sim\xi_{N}}\left[  \langle \mathcal{G}\left(
\gamma; M\right)  ,\mathcal{G}\left(\sigma; M\right)  \rangle
_{\text{HS}}\right]  .
\]
For the specific case where $\xi_{N}$ is the Gaussian measure on Hom$(V,\mathcal{M}_{N}\left(  \mathbb{R}\right))  $ with variance $\frac{1}{N}$, this
result was obtained in the recent paper of Mu\c{c}a Cirone et al. \cite[Theorem 4.4]{SKlimit}. By adopting the
techniques outlined above, we are able to give a concise proof that
also covers the non-Gaussian case, allows for complex valued matrices and dispenses with some unnecessary
independence conditions. We note that \cite{SKlimit} also derives limits for non-linear randomised CDEs, though the proof techniques differ substantially from the linear case. The original motivation in \cite{SKlimit} is to analyse the scaling limits of controlled ResNets arising as the Euler discretisation of CDEs; in particular the authors focus on the kernel function of the limiting random processes. We hope our results help broaden this emerging connection between between families of kernels on unparameterised path space and scaling limits of controlled ResNets, where infinite width and depth limits are known to commute in certain cases \cite{SKlimit}.

This article also sits within a broader body of literature surrounding the scaling limits of random neural networks and the application of random matrix theory within machine learning. Random matrix theory and free probability theory are swiftly arising as popular tools to analyse the properties of relevant matrices. For example, in \cite{pennington2017resurrecting}, the authors employ techniques from free probability theory to derive the singular value distribution of the input-output Jacobian of a network. Furthermore, the eigenvalue distribution of certain non-linear random matrix models was studied in \cite{Eval_nonlinear} and the limiting spectrum of the neural tangent kernel (NTK) has also received much attention \cite{NTK_RMT,wang2023deformed}. We refer the reader to \cite{RMT_DL_summary} for a survey and references on recent results in this area.
\subsection{Outline}
In \cref{sec: Sk} we leverage techniques from non-commutative probability theory to provide a universal version of \cite[Theorem 4.4]{SKlimit}. We adapt the methods from the proof of \cite[Theorem 5.4.2]{RMintro}, which deals with Wigner random matrices, to the setting of $\MM_N(\C)$-valued random matrices. The core of the approach in \cite{RMintro} is the association of undirected graphs with words from an alphabet of $N$ letters. These words then correspond to the individual terms that form the trace of a product of random matrices. When dealing with generic $\MM_N(\C)$-valued matrices, we additionally associate a directed graph with each word. This insight, when combined with the approach of Mu\c{c}a Cirone et al., yields a more concise proof which eliminates certain Gaussian and independence assumptions, and allows for complex valued matrices. The relevant definitions on words and their associated graphs are found in \cref{sec: letters}.

We present our main result \cref{thm: fpcf_eq} in \cref{sec: main_result}, where we prove that a universal limit exists for random $U(N;\C)$-valued path developments. The existence of such a limit is a relatively straightforward consequence of \cite[Theorem 5.4.2]{RMintro} and the techniques from \cref{sec: Sk}. Our main contribution, however, is showing that this limit solves an explicit quadratic functional equation, which has a unique solution, see \cref{lem: func_unique}. The proof of uniqueness is non-trivial due to the quadratic nature of the functional equation. Our proof relies on Dyck words and, to the best of our knowledge, a new notion of the ``generation'' of a Dyck word. The generation of a Dyck word is not its length, but the number of steps taken to generate the word using a certain algorithm; see \cref{sec: generations} for details.

Finally, in \cref{sec: numerical}, we outline an explicit numerical scheme to solve the functional equation. We prove convergence along with rates in \cref{thm: numerical_conv} through an object known as the iterated sums signature and corresponding results from \cite{KO}. We end by comparing this numerical scheme with the Monte Carlo approach of averaging multiple random $U(N;\C)$ path developments with large $N$. In particular, we analyse how the computational time for each method scales in the length of the input path, the dimension of the input path and the dimension $N$ in the case of the randomised approach.

\subsection{Letters, Words and Graphs}\label{sec: letters}
For ease of cross referencing we will base our notation on \cite{RMintro}. Given an alphabet $\AA_n$ of $n$ letters, a word $\bm{w}=w_1w_2\dots w_k\in\WW_n$ is a sequence of letters in $\AA_n$. For $n\geq 1$, let $\WW_n^k$ be the set of words in $n$ letters of length $k$. We also define the set $\WW_n^0=\{\varnothing\}$, which contains the empty word. For a word $\bm{w}=w_1\dots w_k\in\WW_n^k$ and letter $i\in\AA_n$, we define the word $\bm{w}i\coloneqq w_1\dots w_k i\in\WW_n^{k+1}$. Furthermore, for any two words $\bm{w}\in\WW_n^k$ and $\bm{u}\in\WW_n^l$, we define their concatenation as the word $\bm{w}\bm{u}\coloneqq w_1\dots w_k u_1\dots u_l\in\WW_n^{k+l}$. For a word $\bm{w}=w_1\dots w_k\in\WW_n^k$, we define
\begin{enumerate}[label=\arabic*)]
    \item Its length as $\vert\bm{w}\vert=k$.
    \item The support of $\bm{w}$, $\text{supp}(\bm{w})\subseteq \AA_n$, as the set of distinct letters in $\bm{w}$.
    \item Its weight, $\wt{\bm{w}}$, to be the number of distinct letters in $\bm{w}$.
    \item Its transpose $\bm{w}^\star\coloneqq w_k\dots w_1$;
    \item An undirected graph ${G}_{\bm{w}}^u=\big({V}_{\bm{w}},{E}_{\bm{w}}^u\big)$ associated with $\bm{w}$ with vertex set $V_{\bm{w}}= \text{supp}(\bm{w})$ and (undirected) edge set
    \begin{equation*}
        E_{\bm{w}}^u = \big\{\{w_i,w_{i+1}\}: i=1,\dots,k -1\big\}.
    \end{equation*}
    \item A directed graph ${G}_{\bm{w}}^d=\big({V}_{\bm{w}},{E}_{\bm{w}}^d\big)$ associated with $\bm{w}$ with vertex set $V_{\bm{w}}= \text{supp}(\bm{w})$ and (directed) edge set
    \begin{equation*}
        E_{\bm{w}}^d = \big\{(w_i,w_{i+1}): i=1,\dots,k -1\big\}.
    \end{equation*}
    \end{enumerate}
We recall that a walk on a graph $G=(E,V)$ is a sequence of edges connecting a sequence of vertices and a path is a walk in which all vertices are distinct. Finally, a traversal is a walk that visits each vertex at least once. Any word $\bm{w}$ defines a traversal on its associated graph ${G}_{\bm{w}}^u$ and for any edge $e\in E_{\bm{w}}^u,$ we set $N_{\bm{w}}^e$ as the number of times $e$ is traversed in the traversal defined by $\bm{w}$. We call a word $\bm{w}=w_1\dots w_k$ \textit{closed} if $w_1=w_k$. We call a closed word $\bm{w}\in\WW_n^{k+1}$ a \textit{Wigner} word if $\wt{w}=\frac{k}{2}+1$ and $N_{\bm{w}}^e\geq 2$ for each $e\in E_{\bm{w}}^u$. We note that by \cite{RMintro}, this in fact implies $N_{\bm{w}}^e=2$ for any edge in the undirected graph associated with a Wigner word. Finally, two words $\bm{w},\bm{u}\in\WW_n$ are called equivalent if there is a bijection on $\AA_n$ that maps the words onto each other. 

\subsection{The Signature}
Throughout, let $V\equiv\R^d$ be a finite dimensional vector space equipped with an inner product $\langle \cdot, \cdot \rangle$ and associate norm $\norm{\cdot}.$ We recall the notion of the $1$-variation semi-norm.
\begin{definition}
    We write $\XX\coloneqq\CC_1([0,T],V)$ for the space of continuous paths with $\gamma_0=0$ and finite $1$-variation in the sense that
    \begin{equation*}
        \norm{\gamma}_1\coloneqq \sup_{\PP=\{t_i\}}\sum_{i}\big\|{\gamma_{t_{i+1}}-\gamma_{t_i}}\big\|<\infty.
    \end{equation*}
\end{definition}
The signature of a path will be a central object in this work.
\begin{definition}
    For $\gamma\in\XX$, the signature of $\gamma$ is a function
    \[
    \SS_{\cdot,\cdot}(\gamma):[0,T]^2\cap\{(s,t):0\leq s\leq t\leq T\}\to T((V))\coloneqq \prod_{m=0}^\infty V^{\otimes m},
    \]
    where $\SS_{s,s}(\gamma) = (1,0,\dots)$ for all $s\in [0,T]$ and
    \[
    \SS_{s,t}(\gamma)^m\coloneqq \int_{0\leq t_1\leq \dots \leq t_m\leq 1}\dif\gamma_{t_1}\otimes\dots\otimes \dif \gamma_{t_m}\in V^{\otimes m},
    \]
    for $m\geq 1$ with $\SS_{s,t}(\gamma)^0\coloneqq 1\in\R$. For a word $\bm{I}\in\WW_d^m$, we also define the coordinate iterated integral
    \[
    \SS_{s,t}^{\bm{I}}(\gamma)\coloneqq \int_{0\leq t_1\leq \dots \leq t_m\leq 1}\dif\gamma_{t_1}^{\bm{I}_1}\otimes\dots\otimes \dif \gamma_{t_m}^{\bm{I}_m}\in \R,
    \]
    where $\SS_{s,t}^{\varnothing}(\gamma)\coloneqq \SS_{s,t}(\gamma)^0=1.$
\end{definition}
The signature kernel, originally introduced by Kir\`aly and Oberhauser in \cite{KO}, is defined in the following way.
\begin{definition}
    The signature kernel between two paths $\gamma,\sigma\in\XX$ is defined by
    \begin{equation}
        K_{\text{sig}}^{\gamma,\sigma}(s,t)\coloneqq \langle \SS_{0,s}(\gamma),\SS_{0,t}(\sigma)\rangle := \sum_{m=0}^\infty \langle \SS_{0,s}(\gamma)^m,\\S_{0,t}(\sigma)^m\rangle_{\text{HS}},
    \end{equation}
    where for each $m$, $\langle\cdot,\cdot\rangle_{\text{HS}}$ denotes the Hilbert-Schmidt inner product on $V^{\otimes m}$.
\end{definition}
Efficient computation of the signature kernel has seen much interest lately. The authors of \cite{KO} provided a kernel trick for a truncated version of $K_{\text{sig}}(s,t)$. In \cite{SigPDE} it is shown that when $K_{\text{sig}}$ satisfies a Goursat PDE in the case that $\gamma$ and $\sigma$ are continuously differentiable. Finally, in \cite{SKlimit} a connection between the signature kernel and $N$-dimensional randomised controlled differential equations (CDEs) was established in the limit as $N\to\infty$.

\section{The Signature Kernel is a Universal Limit}\label{sec: Sk}
We consider first the case of GL$(N;\R)$-developments. In \cite{SKlimit}, it was shown that the signature kernel between $\gamma,\sigma\in\XX$ may be recovered from the scaling limit of these random developments driven by $\gamma$ and $\sigma$. Indeed, if with $\xi_N$ is the the Gaussian measure on Hom$(V, \MM_N(\R))$ with variance $\frac{1}{N}$, then one obtains
\begin{equation}\label{eq: sk_as_dev}
K_{\text{sig}}^{\gamma,\sigma}(T, T)  =\lim_{N\rightarrow
\infty}\frac{1}{N}\mathbb{E}_{M\sim\xi_{N}}\left[  \langle \mathcal{G}\left(
\gamma; M\right)  ,\mathcal{G}\left(\sigma; M\right)  \rangle
_{\text{HS}}\right].
\end{equation}
The proof of \cref{eq: sk_as_dev} in \cite{SKlimit} is specialised to the Gaussian setting and relies heavily on Isserli's Theorem. We identify a random matrix theory approach, inspired by \cite{RMintro}, where we generalise the preceding in the following ways.
\begin{enumerate}[label=\arabic*)]
    \item We replace the space GL$(N;\R)$ with GL$(N;\C)$, whose Lie algebra is $\mathfrak{g}_N=\MM_N(\C)$.
    \item We replace the Gaussian assumption on the entries of the matrices with a sub-Gaussian assumption on the operator norm of the matrices.
    \item For fixed indices $(m,l)\in\{1,\dots,N\}^2$ we require that $A_i^{N}(m,l)$ and $A_j^N(m,l)$ are uncorrelated and not necessarily independent.
\end{enumerate}
Moreover, our approach results in shorter proof than \cite{SKlimit} and allows for different limits to be derived for randomised CDEs with a different matrix structure, see \cref{sec: main_result}. In the following we will use $\norm{\cdot}_{\mathrm{op}}$ to denote the operator norm of matrices.

If one fixes a basis for $V$, then every $M\in\text{Hom}(V,\mathfrak{g}_N)$ may be written in the form
\[
M(v)=\sum_{j=1}^dA_jv^j,\quad\text{for }A_i\in\mathfrak{g}_N,
\]
so that we have a canonical isomorphism between Hom$(V,\mathfrak{g}_N)$ and $\mathfrak{g}_N^d$. In particular, the spaces of Borel probability measures $\PP(\text{Hom}(V,\mathfrak{g}_N))$ and $\PP(\mathfrak{g}_{N}^d)$ are in bijection. Additionally, the development in \cref{eq: dev 2} can then be written as
\[
\dif Z_t=\sum_{i=1}^dZ_tA_i\dif\gamma_t^i,\quad Z_0=I_N.
\]
\begin{theorem}\label{thm: main_thm}
    Let $(\Omega, \FF, \P)$ be a probability space supporting for each $N\in\N$ and $1\leq i \leq d$, a random matrix $A_i^N:\Omega \to \mathfrak{g}_N$. Assume that for all $k\in\N$,
    \begin{equation}\label{eq: moment_condition}
        \sup_{N\in\N}\sup_{1\leq i \leq d}\sup_{1\leq m,l\leq N}\Ex\big[\big\vert{A_i^N(m,l)}\big\vert^k\big]\leq c_A(k)< \infty,
    \end{equation}
    and that $\Ex\big[A_i^N(m,l)\big]=0$ and $\Ex\big[\big\vert A_i^N(m,l)\big\vert^2\big]=1$. Suppose also that there exists some $\kappa>0$ such that for every $p\in\N$
    \begin{equation}\label{eq: sub_gaussian_condition}
        \sup_{N\in\N}\sup_{1\leq i \leq d}\Ex\bigg[\frac{1}{N^{\frac{p}{2}}}\normopp[\big]{A_i^N}\bigg]\leq \kappa^p\Gamma(\tfrac{p}{2}+1),
    \end{equation}
    where $\Gamma(\cdot)$ is the Gamma function. For each $N\in\N$, we assume that
    \begin{equation}
        \bigvee_{i=1}^d\sigma\big(A_i^N(m,l\big),\quad \text{for } 1\leq m,l\leq N,
    \end{equation}
    are independent sigma algebras and that
    \begin{equation}
    \Ex\big[A_i^N(m,l)A_j^N(m,l)\big]=0,\quad\text{for }i\neq j.
    \end{equation}
    For each $N$, let $\xi_N$ be the law of this collection in $\PP(\mathfrak{g}_N^d)$. Let $\gamma, \sigma\in\XX$ and let $Z_\gamma^N(t),Z_\sigma^N(t)$ be the $\textrm{GL}(N;\C)$-valued solutions to
    \begin{align}\label{eq: CDEs}
        \dif Z_\gamma^N(t) &= \frac{1}{\sqrt{N}}\sum_{i=1}^d Z_\gamma^N(t)A_i^N\dif\gamma_t^i,\ Z_\gamma^N(0)=I_N\\
        \dif Z_\sigma^N(t) &= \frac{1}{\sqrt{N}}\sum_{i=1}^d Z_\sigma^N(t)A_i^N\dif\sigma_t^i,\ Z_\sigma^N(0)=I_N.
    \end{align}
    Then for every $s,t\in[0,T]$
    \begin{equation}
        \lim_{N\to\infty} \frac{1}{N}\Ex_{\xi_N}\big[\langle Z_\gamma^N(s),Z_\sigma^N(t)\rangle_{\text{HS}}\big]=K^{\gamma,\sigma}_{\text{sig}}(s,t).
    \end{equation}
\end{theorem}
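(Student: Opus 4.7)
The plan is to expand both CDEs as Picard series in the signatures and to show that, in the Hilbert-Schmidt inner product, only the ``diagonal'' word pairs $(\bm{I},\bm{I})$ survive in the $N\to\infty$ limit. Explicitly,
\begin{equation*}
Z_\gamma^N(s)=\sum_{m=0}^\infty N^{-m/2}\sum_{\bm{I}\in\WW_d^m}A_{\bm{I}}^N\,\SS_{0,s}^{\bm{I}}(\gamma),\qquad A_{\bm{I}}^N:=A_{\bm{I}_1}^N\cdots A_{\bm{I}_m}^N,
\end{equation*}
and analogously for $Z_\sigma^N(t)$. Expanding the expectation of the HS inner product, the theorem reduces to the pointwise claim
\begin{equation*}
Q^N(\bm{I},\bm{J}):=\frac{1}{N^{1+(|\bm{I}|+|\bm{J}|)/2}}\Ex\big[\text{tr}\big((A_{\bm{I}}^N)^\ast A_{\bm{J}}^N\big)\big]\xrightarrow[N\to\infty]{}\delta_{\bm{I},\bm{J}},
\end{equation*}
plus a dominated-convergence argument to exchange the limit with the double series. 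For the latter, the sub-Gaussian hypothesis \eqref{eq: sub_gaussian_condition} combined with the factorial decay of signatures of bounded-variation paths should yield an absolutely summable majorant independent of $N$.

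For the pointwise limit I would adapt the word/graph proof of Wigner's theorem in \cite[Thm.~5.4.2]{RMintro}. Expanding $\text{tr}\big((A_{\bm{I}}^N)^\ast A_{\bm{J}}^N\big)$ indexes the sum by closed walks $v_0\to v_1\to\cdots\to v_{|\bm{I}|+|\bm{J}|}=v_0$ on $\{1,\dots,N\}$; each such walk is encoded by a closed word $\bm{w}$ whose steps additionally carry a colour $i\in\{1,\dots,d\}$ (read off from the concatenation $\bm{I}^\star\bm{J}$) and a conjugation flag (from $A^\ast$ versus $A$). Using independence of $\bigvee_i\sigma(A_i^N(m,l))$ across $(m,l)$, the mean-zero hypothesis, and the uncorrelatedness on same-$(m,l)$ entries, the expectation factorises over the undirected edges of $G_{\bm{w}}^u$ and vanishes unless every such edge is traversed at least twice. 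This forces $|V_{\bm{w}}|\leq\tfrac{1}{2}(|\bm{I}|+|\bm{J}|)+1$, and since the vertex-labelling prefactor is $N(N-1)\cdots(N-|V_{\bm{w}}|+1)$, contributions from non-Wigner words are $O(N^{-1})$ and can be discarded.

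The extra ingredient beyond the Hermitian case is that the pair of traversals matched on each undirected edge must also agree as directed edges of $G_{\bm{w}}^d$, with opposite conjugation flags and matching colour $i$; any other configuration is killed by the uncorrelatedness hypotheses. On a Wigner word the undirected graph is a plane tree and the walk induces a canonical non-crossing perfect matching on the $|\bm{I}|+|\bm{J}|$ edge-traversals; because the first $|\bm{I}|$ positions in the cyclic word come from $A^\ast$ and the last $|\bm{J}|$ from $A$, the ``one-conjugated-one-not'' constraint forces every pair to straddle this cut, which on a cycle is only possible when $|\bm{I}|=|\bm{J}|$ and, in that case, selects the unique nested matching $(k,|\bm{I}|+|\bm{J}|+1-k)$. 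Colour-matching along this nested pairing then reads off $\bm{I}=\bm{J}$, and each diagonal pair $(\bm{I},\bm{I})$ contributes exactly $1+o(1)$ after the prefactor cancellation. The main obstacle will be this final combinatorial bookkeeping: carefully tracking how the conjugation flag, the colour labels, and the non-crossing matching interact on the plane tree, using the interplay between the two associated graphs $G_{\bm{w}}^u$ and $G_{\bm{w}}^d$ from \cref{sec: letters}, and verifying that no ``exotic'' admissible pairings are missed.
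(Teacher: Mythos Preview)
Your proposal is correct and follows essentially the same route as the paper: the Picard/signature expansion, the dominated-convergence exchange via \eqref{eq: sub_gaussian_condition} and factorial decay, and the word--graph reduction to Wigner words from \cite{RMintro} culminating in the pointwise limit $Q^N(\bm{I},\bm{J})\to\delta_{\bm{I},\bm{J}}$ are exactly what the paper does in Lemma~\ref{lem: correct_limit} and the subsequent proof. Your final combinatorial bookkeeping is phrased via the non-crossing matching ``straddling the cut'' rather than the paper's path-plus-induction argument that $w_{n+1-r}=w_{n+1+r}$, but these are two presentations of the same observation and lead to the identical single-branch tree and $\bm{I}=\bm{J}$.
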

As in \cite{SKlimit}, the proof relies on the expansion of \cref{eq: CDEs} as a convergent infinite series of integrals, and a careful analysis of the trace of products of random matrices from the set $\{A_i^N:1\leq i \leq d\}$ in the limit as $N\to \infty$.
\begin{remark}
    Special cases of the following lemma can be deduced from existing results in the literature. For example, both the complex and real Gaussian case may be derived from \cite[Sections 2 and 4.4]{RGinibre}, while matrices with i.i.d. entries satisfying a fourth moment condition can also be found in \cite[Section 4.1]{RGinibre}.
\end{remark}
\begin{notation}
    For $A\in\MM_N(\C)$, $A^\ast$ will denote the conjugate transpose of $A$. The Hilbert-Schmidt inner product on $\MM_N(\C)$ is then given by
    \[
    \langle A, B\rangle_{\text{HS}} = \text{tr}\big(A^\ast B\big).
    \]
\end{notation}
\begin{lemma}\label{lem: correct_limit}
    Suppose that for each $N\in\N$, $\{A_i^N:1\leq i \leq d\}$ is a collection of $d$ matrices satisfying the assumptions of \cref{thm: main_thm}. Let $n,m\geq 0$ be non-negative integers and let $\bm{I}=i_1\dots i_n \in \WW_d^n$, $\bm{J}=j_1\dots j_m \in \WW_d^m$, with corresponding matrix products 
    \begin{align*}
           {A_{\bm{I}}^N} &\coloneqq {A_{i_1}^N}\dots {A_{i_n}^N}\\
            A_{\bm{I}\star \bm{J}}^N &\coloneqq {A_{\bm{I}}^{N}}^\ast A_{\bm{J}}^N          
    \end{align*}
    Then, setting $k=n+m$,
    \begin{equation}\label{eq: correct_limit}
        \lim_{N\to\infty}\frac{1}{N^{\frac{k}{2}+1}}\Ex\Big[\tr{A_{\bm{I}\star \bm{J}}^N}\Big]=
        \begin{cases}
            1,\quad \text{if }\bm{I}=\bm{J},\\
            0,\quad \text{otherwise}.
        \end{cases}
    \end{equation}
\end{lemma}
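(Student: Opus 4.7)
The plan is to mimic the classical Wigner moment method, adapted to the asymmetric product $A_{\bm{I}\star\bm{J}}^N = (A_{\bm{I}}^N)^\ast A_{\bm{J}}^N$ using the graph-theoretic language of \cref{sec: letters}. Expanding entrywise gives
\[
\Ex\bigl[\tr(A_{\bm{I}\star\bm{J}}^N)\bigr] = \sum_{\bm{w}} \Ex[M_{\bm{w}}],
\]
where $\bm{w} = w_0 w_1 \cdots w_k$ ranges over closed words of length $k+1$ in the alphabet $\{1,\ldots, N\}$ (so $w_0 = w_k$), and $M_{\bm{w}}$ is a product of $k$ entries of the $A_{j_\cdot}^N$ at directed positions $(w_{s-1},w_s)$ for $s > n$ and of conjugated entries at $(w_s,w_{s-1})$ for $s \leq n$. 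By the independence of the sigma-algebras $\bigvee_i \sigma(A_i^N(m,l))$ across distinct positions, $\Ex[M_{\bm{w}}]$ factorises along the distinct directed edges of $G_{\bm{w}}^d$, and the mean-zero assumption annihilates any word containing a directed edge visited only once.

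Grouping the surviving words by equivalence class, a class of weight $V = \wt{\bm{w}}$ has $\sim N^V$ representatives and contributes $O(N^V)$ by the entrywise moment bound \cref{eq: moment_condition}. The graph-theoretic inequality exploited in \cite[Theorem 5.4.2]{RMintro} gives $V \leq k/2 + 1$, with equality precisely when $\bm{w}$ is a Wigner word, so after dividing by $N^{k/2+1}$ only Wigner words survive in the limit; this already forces $k$ to be even and hence handles the case $|\bm{I}| \neq |\bm{J}|$. For a Wigner word, the traversal of the tree $G_{\bm{w}}^u$ induces a non-crossing pair partition of $\{1, \ldots, k\}$ matching the two visits to each undirected edge. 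Within each pair, the factors either hit the same directed position in $\mathcal{M}_N(\mathbb{C})$, contributing a second moment of that entry, or they hit opposite directed positions $(m,l)$ and $(l,m)$, which are independent and mean-zero and thus contribute nothing. Combining this with the orthogonality $\Ex[A_i^N(m,l) A_j^N(m,l)] = 0$ for $i\neq j$ and the normalisation $\Ex[|A_i^N(m,l)|^2] = 1$ forces the two indices labelling each surviving pair to coincide.

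The main obstacle is the combinatorial argument showing that these label constraints propagate through the non-crossing structure and the $A^\ast/A$-split to pin down $\bm{I} = \bm{J}$. I plan to argue by induction on $k$ via the innermost-pair decomposition of non-crossing partitions. Any such partition contains a pair of the form $(s, s+1)$; the label-matching required by the previous paragraph peels off one letter from each of $\bm{I}$ and $\bm{J}$, either both from the same side when $(s,s+1)$ lies inside one block, or one from each side when the pair straddles the split at position $n$. Either way the problem reduces to a smaller instance of the lemma. Tracing this reduction, every compatible partition enforces $\bm{I} = \bm{J}$ letter by letter, which handles the vanishing case; in the case $\bm{I} = \bm{J}$, a careful count of the surviving matchings and their multiplicities shows that the total leading contribution equals $N^{k/2+1}(1 + o(1))$, yielding the limit $1$.
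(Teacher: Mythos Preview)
Your overall strategy—Wigner-style expansion over closed words, reduction to Wigner words via the weight bound $\wt{\bm w}\le k/2+1$, then analysis of which non-crossing pairings survive—is the paper's. Two points need care.

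First, the statement that $\Ex[M_{\bm w}]$ ``factorises along the distinct directed edges of $G_{\bm w}^d$'' is not correct as written. The independence is over distinct \emph{matrix positions} $(m,l)$, and for a step $s\le n$ the position $(w_s,w_{s-1})$ is reversed relative to the walk direction. This matters: for the closed word $w_1w_2w_1$ with $n=m=1$, each walk-directed edge is visited once, yet both steps hit the \emph{same} matrix position $(w_2,w_1)$ and the term equals $\delta_{i_1j_1}$, not zero. What is true—and is all you need for the reduction to Wigner words—is that an \emph{undirected} edge visited only once forces some matrix position to appear exactly once and hence vanishes; this is the criterion the paper takes from \cite[Lemma~2.1.6]{RMintro}.

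Second, your innermost-pair induction can be made to work, but the phrasing obscures the key dichotomy. For a Wigner word each undirected edge of the tree is traversed once in each direction; if an adjacent pair $(s,s{+}1)$ lies entirely in one block, the two factors sit at transposed matrix positions $(a,b)$ and $(b,a)$, which are independent, so the whole term vanishes—it does not ``reduce to a smaller instance''. Only the straddling adjacent pair, necessarily $s=n$, survives and peels off $i_1=j_1$. Iterating, the unique surviving partition is the rainbow $\{n,n{+}1\},\{n{-}1,n{+}2\},\dots$, which already forces $n=m$ and $\bm I=\bm J$; there is exactly one equivalence class, with $N(N-1)\cdots(N-\tfrac{k}{2})$ members, so no further ``careful count of surviving matchings'' is needed.

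The paper reaches the same endpoint by a direct structural argument in place of induction: having shown every edge has one traversal from each block, it notes that the sub-walk $w_1\cdots w_{n+1}$ uses each tree edge at most once and is therefore a simple path, and likewise for the return $w_{n+1}\cdots w_kw_1$; uniqueness of paths in a tree then gives $w_{n+1-r}=w_{n+1+r}$, so the tree is a single branch and the pairing is forced. This makes the ``one surviving class'' claim immediate without the peeling recursion.
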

We use the convention that if $n=0$, then $ A_{\bm{I}}^N=I_N$, the $N\times N$ identity matrix.
\begin{proof}
    We start by expanding the LHS of \cref{eq: correct_limit} in terms of the individual elements composing the trace. We write
    \begin{equation}\label{eq: trace_expansion}
        \frac{1}{N^{\frac{k}{2}+1}}\Ex\Big[\tr{A_{\bm{I}\star \bm{J}}^N}\Big]=\frac{1}{N^{\frac{k}{2}+1}}\sum_{\bm{w}}\mathscr{C}_{\bm{w}},
    \end{equation}
    where the sum is over closed words $\bm{w}=w_1 w_2\dots w_n w_{n+1} \dots w_k w_{1}\in\WW_{N}^{k+1}$ of length $k+1\coloneq n+m+1$, and where
    \begin{align*}
         \mathscr{C}_{\bm{w}}&\coloneqq \Ex\bigg[{A_{i_n}^{N^\ast}}(w_1,w_2) {A_{i_{n-1}}^{N^\ast}}(w_2,w_3)\dots {A_{i_1}^{N^\ast}}(w_n,w_{n+1}) {A_{j_1}^N}(w_{n+1},w_{n+2})\dots {A_{j_m}^N}(w_{k},w_{1}) \bigg]
    \end{align*}
    Since $\bm{w}$ is a closed word, it defines a traversal on the undirected graph ${G}_{\bm{w}}^u$. As in \cite{RMintro}[Theorem 5.4.2, Lemma 2.1.6], we can deduce that
    \begin{enumerate}[label=\arabic*)]
        \item $\mathscr{C}_{\bm{w}}$ vanishes unless $N_{\bm{w}}^e\geq 2$ for all $e\in {E}_{\bm{w}}^u$.
        \item The number of $N$-words in the equivalence class of any given $N$-word $\bm{w}$ with $\wt{\bm{w}}=t$ is $N(N-1)\dots (N-t+1)\leq N^t$. This can be seen by counting the size of the image of $\bm{w}$ under all bijections on $\AA_n$.
        \item The number of equivalence classes of closed $N$-words $\bm{w}$ of length $k+1$ with $\wt{\bm{w}}=t$ for which $N_{\bm{w}}^e\geq 2$ for every $e\in {E}_{\bm{w}}^u$ is bounded above by $t^k\leq k^k$, since there are at most $t$ choices for each letter excluding the final one.
        \item The weight of a closed word of length $k+1$ with $N_{\bm{w}}^e\geq 2$ for every $e\in {E}_{\bm{w}}^u$ is at most $\frac{k}{2}+1$.
    \end{enumerate}
    It follows that
    \[
    \left\vert \sum_{\bm{w}:\wt{\bm{w}}\leq \frac{k}{2}}\mathscr{C}_{\bm{w}}\right\vert\leq \sum_{t\leq \frac{k}{2}} N^t c_{A}(k)t^k\leq C(k)N^{\frac{k}{2}},
    \]
    and combining this with \cref{eq: trace_expansion}, we see 
    \[
    \left\vert\frac{1}{N^{\frac{k}{2}+1}}\Ex\Big[\tr{A_{\bm{I}\star \bm{J}}^N}\Big] - \frac{1}{N^{\frac{k}{2}+1}}\sum_{\bm{w}:\wt{\bm{w}}=\frac{k}{2}+1}\mathscr{C}_{\bm{w}}\right\vert\leq \frac{C(k)}{N}.
    \]
    Note that the set $\{\bm{w}:\wt{\bm{w}}=\frac{k}{2}+1\}$ is empty whenever $k$ is odd. Hence, if $n+m$ is odd, we have that
    \[
   \lim_{N\to\infty}\frac{1}{N^{\frac{k}{2}+1}}\Ex\Big[\tr{A_{\bm{I}\star \bm{J}}^N}\Big]=0.
    \]
    If $k$ is even and $\bm{w}$ is a closed word of length $k+1$ with $\wt{\bm{w}}=\frac{k}{2}+1$, then $G_{\bm{w}}^u$ is a tree \cite{RMintro}[Chapter 2.1.3] and that $N_{\bm{w}}^e=2$ for every $e\in {E}_{\bm{w}}^u$. As such, $\mathscr{C}_{\bm{w}}$ is a product of covariances which will either be $1$ or $0$. There are two situations in which this product can vanish. Firstly, whenever the two traversals of an edge $e\in {E}_{\bm{w}}^u$ correspond to different matrices. Secondly, if both traversals are from the same matrix, but one traversal corresponds to an entry above the diagonal and the other to an entry below the diagonal. It is the latter reason that differentiates our problem from the Hermitian setting.

    Since $G_{\bm{w}}^u$ is a tree, the traversal defined by $\bm{w}$ traverses each edge once in each direction. Let $(w_{r}, w_{r+1})\in E_{\bm{w}}^d$ correspond to the first traversal of $\{w_r,w_{r+1}\}$. Then, for some $s\in \{r+1,\dots k\}$, $w_s = w_{r+1}$ and $w_{s+1} = w_r$. In other words $(w_s, w_{s+1})=(w_{r+1},w_r)$. If $s\leq n $, the corresponding component of $\mathscr{C}_{\bm{w}}$ is given by
   \begin{equation*}
        \Ex\Big[{A_{i_{n+1-r}}^{N^\ast}}(w_r,w_{r+1}) {A_{i_{n+1-s}}^{N^\ast}}(w_s,w_{s+1})\Big]=\Ex\Big[{A_{i_{n+1-r}}^{N^\ast}}(w_r,w_{r+1}) {A_{i_{n+1-s}}^{N^\ast}}(w_{r+1},w_{r})\Big] = 0.
    \end{equation*}
    Similarly, if $r\geq n + 1$, the term is given by
     \begin{equation*}
        \Ex\Big[A_{j_{r-n}}^N(w_r,w_{r+1}) A_{j_{s-n}}(w_s,w_{s+1})\Big]=\Ex\Big[A_{j_{r-n}}^N(w_r,w_{r+1}) A_{j_{s-n}}^N(w_{r+1},w_{r})\Big] = 0.
    \end{equation*}
    So it must be that $r\leq n$ and $s \geq n+1$. Consequently, for every edge, one traversal comes from $\bm{I}$ and one from $\bm{J}$ meaning that $|\bm{I}|=|\bm{J}|$. Furthermore, since $G_{\bm{w}}^u$ is a tree, there can be no repeated letters in $w_1\dots w_{n+1}$, as otherwise there would be two traversals of an edge coming from $\bm{I}$. Hence, the walk corresponding to the sequence of vertices $w_1,\dots,w_{n+1}$ is a path, as is the walk on the sequence $w_{n+1},w_{n+2},\dots,w_1$. Consider the vertex $w_{n+1}$; we must traverse the edge $(w_{n+1},w_n)$ at some point on the path $w_{n+1},w_{n+2},\dots,w_1$. Since the path cannot visit $w_{n+1}$ again, in order to traverse the edge $(w_{n+1},w_n)$, it must be that $w_{n+2}=w_n$. By induction, the only way for each edge to be traversed twice is when $w_{n+1-r}=w_{n+1+r}$ for $r=1,\dots,n$. And so there is exactly one equivalence class of words for which $\mathscr{C}_{\bm{w}}$ is non-zero: those where $G_{\bm{w}}^u$ is a tree with one branch. When this holds, the term for the traversals of an edge $\{w_{n+1-r},w_{n+2 -r}\}$ in $\mathscr{C}_{\bm{w}}$ is
    \begin{align*}
    &\Ex\Big[{A_{i_{n+1-(n+1-r)}}^{N^\ast}}(w_{n+1-r},w_{n+1 -(r-1)}) {A_{j_{(n+1+(r-1)) - n}}^N}(w_{n+1+(r-1)},w_{n+1+r})\Big]\\
    =&\Ex\Big[\overline{{A_{i_{r}}^N}(w_{n+1 -(r-1)},w_{n+1-r})} {A_{j_{r}}^N}(w_{n+1+(r-1)},w_{n+1+r})\Big]=\delta_{{i_r}{j_r}}.
    \end{align*}
    This implies $\bm{I}=\bm{J}$ for the limit to be non-trivial. We conclude that
    \begin{align*}
            \lim_{N\to\infty} \frac{1}{N^{\frac{k}{2}+1}}\Ex\big[\tr{ A_{\bm{I}\star\bm{J}}^N}\big]&= \lim_{N\to\infty}\frac{1}{N^{\frac{k}{2}+1}}\sum_{\bm{w}:\wt{\bm{w}}=\frac{k}{2}+1}\mathscr{C}_{\bm{w}}\\
            &=\lim_{N\to\infty}\frac{N(N-1)\dots (N-\frac{k}{2})}{N^{\frac{k}{2}+1}}\delta_{\bm{I}\bm{J}} =\delta_{\bm{I}\bm{J}}.
    \end{align*}
    \end{proof}
    
\begin{proof}[Proof of \cref{thm: main_thm}]
    As in \cite{SKlimit}, by considering the expansion of the ordered-exponential we see that
    \begin{equation}\label{eq: sig_expansion}
         \lim_{N\to\infty}\frac{1}{N}\Ex_{\xi_N}\big[\langle Z_\gamma^N(s),Z_\sigma^N(t)\rangle_{\text{HS}}\big]=\lim_{N\to\infty}\Ex_{\xi_N}\bigg[\sum_{|\bm{I}|,|\bm{J}|=0}^\infty \frac{1}{N^{\frac{|\bm{I}^\star\bm{J}|}{2}+1}}\tr{ A_{\bm{I}\star \bm{J}}^N}\hspace{1mm}\SS_{0,s}^{\bm{I}}(\gamma)\SS_{0,t}^{\bm{J}}(\sigma)\bigg].
    \end{equation}
    In order to apply \cref{lem: correct_limit}, we would like to exchange the limit and expectation and the double sum. By Fubini-Tonelli and the dominated convergence theorem, to justify the exchange it is enough to show that
\begin{equation}\label{eq: ft_dct_bound}
    \sum_{|\bm{I}|,|\bm{J}|=0}^\infty \frac{1}{N^{\frac{|\bm{I}^\star\bm{J}|}{2}+1}}\Ex_{\xi_N}\Big[\Big\vert\tr{A_{\bm{I}\star \bm{J}}^N }\Big\vert\Big] \hspace{1mm}\big\vert\SS_{0,s}^{\bm{I}}(\gamma)\SS_{0,t}^{\bm{J}}(\sigma)\big\vert
\end{equation}
is uniformly bounded in $N$. By the assumptions of \cref{thm: main_thm},
\begin{equation*}
    \frac{1}{N^{\frac{|\bm{I}^\star\bm{J}|}{2}+1}}\Ex_{\xi_N}\Big[\Big\vert\tr{A_{\bm{I}\star \bm{J}}^N }\Big\vert\Big]\leq  \frac{1}{N^{\frac{|\bm{I}^\star\bm{J}|}{2}+1}}\Ex_{\xi_N}\Big[\normop[\big]{A_{\bm{I}\star \bm{J}}^N}\Big]\leq \kappa^{|\bm{I}^\star\bm{J}|}\Gamma\big(\tfrac{|\bm{I}^\star\bm{J}|}{2}+1\big).
\end{equation*}
Then, by the factorial decay of the signature and the fact that the $L_2$ norm is at least as large as the $L_1$ norm on $V^{\otimes n}$, there exists some $\omega>0$ for which \cref{eq: ft_dct_bound} is bounded by
\begin{align*}
    \sum_{n,m=0}^\infty \frac{\Gamma(\tfrac{n+m}{2}+1)(\omega\kappa)^{m+n}}{\Gamma(n+1)\Gamma(m+1)}&\leq \sum_{n,m=0}^\infty \frac{\sqrt{\Gamma(n+1)\Gamma(m+1)}(\omega\kappa)^{n+m}}{\Gamma(n+1)\Gamma(m+1)}\\
    &=\bigg(\sum_{n=0}^\infty\frac{(\omega\kappa)^{n}}{\sqrt{\Gamma(n+1)}}\bigg)^2<\infty,
\end{align*}
where the inequality follows from logarithmic convexity and Jensen's inequality. Hence, by an exchange of limits and an application of \cref{lem: correct_limit}, we see that
\begin{align*}
    \lim_{N\to\infty}\frac{1}{N}\Ex_{\xi_N}\big[\langle Z_\gamma^N(s),Z_\sigma^N(t)\rangle_{\text{HS}}\big]&=
    \sum_{|\bm{I}|,|\bm{J}|=0}^\infty\lim_{N\to\infty} \frac{1}{N^{\frac{|\bm{I}^\star\bm{J}|}{2}+1}}\Ex_{\xi_N}\Big[\tr{A_{\bm{I}\star \bm{J}}^N}\Big]\hspace{1mm}\SS_{0,s}^{\bm{I}}(\gamma)\SS_{0,t}^{\bm{J}}(\sigma)\\
    &=\sum_{|\bm{I}|=0}^\infty \SS_{0,s}^{\bm{I}}(\gamma)\SS_{0,t}^{\bm{I}}(\sigma)\\
    &=\langle \SS_{0,s}(\gamma),\SS_{0,t}(\sigma)\rangle_{\text{HS}}\\
    &=K_{\text{sig}}^{\gamma, \sigma}(s,t).
\end{align*}
\end{proof}
As in \cite{SKlimit}, we can also prove that convergence occurs in $L^2$, and not just in expectation.
\begin{theorem}\label{thm: L2}
        Let $(\Omega,\FF,\P)$, $A_i^N$, $\gamma,\sigma$ and $Z_\gamma^N, Z_\sigma^N$ be as in \cref{thm: main_thm}, then
        \begin{equation}
            \frac{1}{N}\langle Z_\gamma^N(s),Z_\sigma^N(t)\rangle_\text{HS}\xrightarrow{L_2}K_{\text{sig}}^{\gamma,\sigma}(s,t).
        \end{equation}
\end{theorem}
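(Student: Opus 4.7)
The plan is to deduce $L_2$ convergence from \cref{thm: main_thm} (which already provides convergence in expectation) by showing $\Var(X_N)\to 0$, where $X_N := \tfrac{1}{N}\langle Z_\gamma^N(s),Z_\sigma^N(t)\rangle_{\text{HS}}$. Since $\Ex[X_N]\to K_{\text{sig}}^{\gamma,\sigma}(s,t)$, it suffices to show that $\Ex[|X_N|^2]\to K_{\text{sig}}^{\gamma,\sigma}(s,t)^2$.

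First I would expand $|X_N|^2 = X_N\overline{X_N}$ using $\overline{\tr(M)} = \tr(M^*)$ and $(A_{\bm{K}\star\bm{L}}^N)^*=A_{\bm{L}\star\bm{K}}^N$ to obtain
\[
\Ex[|X_N|^2]=\sum_{\bm{I},\bm{J},\bm{K},\bm{L}}\frac{\Ex\big[\tr(A_{\bm{I}\star\bm{J}}^N)\tr(A_{\bm{L}\star\bm{K}}^N)\big]}{N^{(|\bm{I}|+|\bm{J}|+|\bm{K}|+|\bm{L}|)/2+2}}\SS_{0,s}^{\bm{I}}(\gamma)\SS_{0,t}^{\bm{J}}(\sigma)\SS_{0,s}^{\bm{K}}(\gamma)\SS_{0,t}^{\bm{L}}(\sigma).
\]
Using the sub-Gaussian operator norm bound \cref{eq: sub_gaussian_condition}, sub-multiplicativity of the operator norm, and the factorial decay of signatures, exactly the argument used in the proof of \cref{thm: main_thm} yields a uniform-in-$N$ bound on the quadruple series, legitimising the exchange of limit and summation.

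The key new ingredient is the extension of \cref{lem: correct_limit} to products of two traces: for any words $\bm{I},\bm{J},\bm{K},\bm{L}$ with $k=|\bm{I}|+|\bm{J}|$ and $l=|\bm{K}|+|\bm{L}|$,
\[
\lim_{N\to\infty}\frac{1}{N^{(k+l)/2+2}}\Ex\big[\tr(A_{\bm{I}\star\bm{J}}^N)\tr(A_{\bm{L}\star\bm{K}}^N)\big]=\delta_{\bm{I}\bm{J}}\,\delta_{\bm{K}\bm{L}}.
\]
Each trace expands as a sum over a closed word ($\bm{w}\in\WW_N^{k+1}$ for the first, $\bm{v}\in\WW_N^{l+1}$ for the second), and the joint expectation is controlled by the combined undirected graph $G:=(V_{\bm{w}}\cup V_{\bm{v}},\,E_{\bm{w}}^u\cup E_{\bm{v}}^u)$. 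By the mean-zero and across-position independence assumptions on the entries, non-vanishing contributions require each edge of $G$ to be traversed at least twice in total by the two closed walks, so that $|E|\leq (k+l)/2$ and hence $|V_{\bm{w}}\cup V_{\bm{v}}|\leq|E|+c$, where $c\in\{1,2\}$ is the number of connected components of $G$ (each closed walk being confined to a single component). The maximal weight $(k+l)/2+2$ is achieved only when $G$ is a forest with exactly two disjoint tree components, each traversed as a Wigner word by one of $\bm{w}$ or $\bm{v}$. In that regime the joint expectation factorises across the disjoint index ranges, reducing to \cref{lem: correct_limit} applied separately to each trace, which forces $\bm{I}=\bm{J}$ and $\bm{K}=\bm{L}$ and contributes $N(N-1)\dots(N-(k+l)/2-1)\sim N^{(k+l)/2+2}$ equivalence classes.

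Substituting back,
\[
\Ex[|X_N|^2]\;\longrightarrow\;\sum_{\bm{I},\bm{K}}\SS_{0,s}^{\bm{I}}(\gamma)\SS_{0,t}^{\bm{I}}(\sigma)\SS_{0,s}^{\bm{K}}(\gamma)\SS_{0,t}^{\bm{K}}(\sigma)=K_{\text{sig}}^{\gamma,\sigma}(s,t)^2,
\]
which together with \cref{thm: main_thm} gives $\Var(X_N)\to 0$ and hence the $L_2$ convergence. The main obstacle is the combinatorics of the combined graph: one must rule out leading-order contributions from configurations where the supports of $\bm{w}$ and $\bm{v}$ overlap (so that $G$ is connected or contains a cycle) by showing each such class satisfies $|V_{\bm{w}}\cup V_{\bm{v}}|\leq(k+l)/2+1$ and is therefore suppressed by an extra factor of $\tfrac{1}{N}$. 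This is the only place the argument departs substantively from the single-trace analysis already carried out in \cref{lem: correct_limit}.
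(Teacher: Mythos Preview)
Your proposal is correct and follows essentially the same route as the paper: expand the product as a quadruple signature series, justify the exchange of limits via the sub-Gaussian operator-norm bound, and reduce the claim to the two-trace limit $\lim_{N}\frac{1}{N^{(k+l)/2+2}}\Ex[\tr(A_{\bm{I}\star\bm{J}}^N)\tr(A_{\bm{K}\star\bm{L}}^N)]=\delta_{\bm{I}\bm{J}}\delta_{\bm{K}\bm{L}}$, proved by analysing the union graph $G_{\bm{w}}^u\cup G_{\bm{v}}^u$ and observing that maximal weight forces two disjoint trees so the expectation factorises. The only cosmetic difference is that you argue via $\Ex[|X_N|^2]\to K_{\text{sig}}^2$ (handling the complex conjugate by $(A_{\bm{K}\star\bm{L}}^N)^\ast=A_{\bm{L}\star\bm{K}}^N$), whereas the paper expands $(X_N-K_{\text{sig}})^2$ directly and cancels cross terms using \cref{thm: main_thm}; both reductions arrive at the identical combinatorial lemma.
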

\begin{proof}
    By expanding
    \[
    \Ex_{\xi_N}\Big[\Big(\frac{1}{N}\langle Z_\gamma^N(s),Z_\sigma^N(t)\rangle_{\text{HS}}-\langle \SS_{0,s}(\gamma),\SS_{0,t}(\sigma)\rangle_{\text{HS}}\Big)^2\Big]
    \]
    in the fashion of \cref{eq: sig_expansion}, we see that the objective is to prove
    \begin{equation}\label{eq: sig_expansion_quad}
    \begin{split}
        \lim_{N\to\infty} \Ex_{\xi_N}\bigg[&\sum_{|\bm{I}|,|\bm{K}|,|\bm{L}|,|\bm{K}|=0}^\infty \bigg(\frac{1}{N^{\frac{|\bm{I}^\star\bm{J}|}{2}+1}}\tr{A_{\bm{I}\star \bm{J}}^N} -\delta_{\bm{I}\bm{J}}\bigg)\cdot\\
        &\cdot\bigg(\frac{1}{N^{\frac{|\bm{K}^\star\bm{L}|}{2}+1}}\tr{A_{\bm{K}\star \bm{L}}^N} -\delta_{\bm{K}\bm{L}}\bigg)\hspace{1mm}\SS_{0,s}^{\bm{I}}(\gamma)\SS_{0,t}^{\bm{J}}(\sigma)\SS_{0,s}^{\bm{K}}(\gamma)\SS_{0,t}^{\bm{L}}(\sigma)\bigg]=0.
    \end{split}
    \end{equation}
    As in the proof of \cref{thm: main_thm}, we can show that exchanging the limit and expectation with the quadruple sum is justified. Multiplying out the brackets in \cref{eq: sig_expansion_quad} and taking limits of terms already dealt with in \cref{thm: main_thm} reduces the problem to showing
    \begin{equation}\label{eq: L2 expansion}
    \begin{split}
        \sum_{|\bm{I}|,|\bm{K}|,|\bm{L}|,|\bm{K}|=0}^\infty \bigg(\lim_{N\to\infty}&\frac{1}{N^{\frac{|\bm{I}^\star\bm{J}|+|\bm{K}^\star\bm{L}|}{2}+2}}\Ex_{\xi_N}\Big[\tr{A_{\bm{I}\star \bm{J}}^N}\tr{A_{\bm{K}\star \bm{L}}^N}\Big]\\
        &-\delta_{\bm{I}\bm{J}}\delta_{\bm{K}\bm{L}}\bigg)\hspace{1mm}\SS_{0,s}^{\bm{I}}(\gamma)\SS_{0,t}^{\bm{J}}(\sigma)\SS_{0,s}^{\bm{K}}(\gamma)\SS_{0,t}^{\bm{L}}(\sigma)=0.
    \end{split}
    \end{equation}
    Thus it remains to show that
    \[
    \lim_{N\to\infty}\frac{1}{N^{\frac{|\bm{I}^\star\bm{J}|+|\bm{K}^\star\bm{L}|}{2}+2}}\Ex_{\xi_N}\Big[\tr{A_{\bm{I}\star \bm{J}}^N}\tr{A_{\bm{K}\star \bm{L}}^N}\Big]=\delta_{\bm{I}\bm{J}}\delta_{\bm{K}\bm{L}}.
    \]
    Given closed words $\bm{w}=w_1\dots w_{n}w_1$ and $\bm{u}=u_1\dots u_m u_{1}$, we define the graphs $G_{\bm{w},\bm{u}}^{u}\coloneqq G_{\bm{w}}^{u}\cup G_{\bm{u}}^{u}$. Now we can write
    \begin{equation}
        \frac{1}{N^{\frac{|\bm{I}^\star\bm{J}|+|\bm{K}^\star\bm{L}|}{2}+2}}\Ex_{\xi_N}\Big[\tr{A_{\bm{I}\star \bm{J}}^N}\tr{A_{\bm{K}\star \bm{L}}^N}\Big]=\frac{1}{N^{\frac{|\bm{I}^\star\bm{J}|+|\bm{K}^\star\bm{L}|}{2}+2}}\sum_{\bm{w},\bm{u}}\mathscr{C}_{\bm{w},\bm{u}},
    \end{equation}
    where $\bm{w}=w_1 w_2\dots w_r w_{r+1} \dots w_n w_{1}$ and $\bm{u}=u_1 u_2\dots u_s u_{s+1} \dots u_m u_{1}$ and
    \[
    \begin{split}
    \mathscr{C}_{\bm{w},\bm{u}}\coloneqq \Ex_{\xi_N}\bigg[&{A_{i_{|\bm{I}|}}^{N^\ast}}(w_1,w_2) \dots {A_{i_1}^{N^\ast}}(w_r,w_{r+1}) {A_{j_1}^N}(w_{r+1},w_{r+2})\dots {A_{j_{|\bm{J}|}}^N}(w_{n},w_{1})\\
        &{A_{k_{|\bm{K}|}}^{N^\ast}}(u_1,u_2) \dots {A_{k_1}^{N^\ast}}(u_s,u_{s+1}) {A_{l_1}^N}(u_{s+1},u_{s+2})\dots {A_{l_{|\bm{L}|}}^N}(u_{m},u_{1})\bigg].
    \end{split}
    \]
    As in \cref{lem: correct_limit}, $N_{\bm{w},\bm{u}}^e$ must be at least $2$ for every $e\in E_{\bm{w},\bm{u}}^u$ in order for $\mathscr{C}_{\bm{w},\bm{u}}$ to not vanish. This implies that $|E_{\bm{w},\bm{u}}^u|\leq \tfrac{n+m}{2}$, and since $G_{\bm{w},\bm{u}}^{u}$ has at most two connected components, the restriction $\wt{\bm{w}\bm{u}}=|V_{\bm{w},\bm{u}}^u|\leq \tfrac{n+m}{2}+2$ is imposed. Moreover, $\wt{\bm{w}\bm{u}}=\tfrac{n+m}{2}+2$ if and only if $G_{\bm{w},\bm{u}}^{u}$ has two connected components, one with $\frac{n}{2}+1$ vertices and the other with $\frac{m}{2}+1$ vertices; in other words $G_{\bm{w}}^u$ and $G_{\bm{u}}^u$ are disjoint trees. In this case
    \[
\mathscr{C}_{\bm{w},\bm{u}}=\mathscr{C}_{\bm{w}}\big(A_{\bm{I}\star \bm{J}}^N\big)\mathscr{C}_{\bm{u}}\big(A_{\bm{K}\star \bm{L}}^N\big),
    \]
    since the words are disjoint. This reduces the problem to the one handled \cref{lem: correct_limit}. This product will be non-zero if and only if $\bm{I}=\bm{J}$ and $\bm{K}=\bm{L}$. Since there are $N(N-1)\dots \big(N-\tfrac{|\bm{I}^\star\bm{J}|+|\bm{K}^\star\bm{L}|}{2}-1\big)$ members of the equivalence class of $\bm{w}\bm{u}$, it holds that
    \small{
    \[
    \lim_{N\to\infty}\frac{\Ex_{\xi_N}\Big[\tr{A_{\bm{I}\star \bm{J}}^N}\tr{A_{\bm{K}\star \bm{L}}^N}\Big]}{N^{\frac{|\bm{I}^\star\bm{J}|+|\bm{K}^\star\bm{L}|}{2}+2}}=\lim_{N\to\infty}\frac{N(N-1)\dots \big(N-\tfrac{|\bm{I}^\star\bm{J}|+|\bm{K}^\star\bm{L}|}{2}-1\big)}{N^{\frac{|\bm{I}^\star\bm{J}|+|\bm{K}^\star\bm{L}|}{2}+2}}\delta_{\bm{I}\bm{J}}\delta_{\bm{K}\bm{L}}=\delta_{\bm{I}\bm{J}}\delta_{\bm{K}\bm{L}}.
    \]}
\end{proof}
\begin{remark}
    The assumptions of \cref{thm: main_thm,thm: L2} are satisfied in the case of independent entries of sub-Gaussian random variables under the condition
    \[
    \sup_{N\in\N}\sup_{1\leq i \leq d}\sup_{1\leq m,l\leq N} \norm{A_i^N(m,l)}_{\psi}<\infty,
    \]
    where $\norm{\cdot}_\psi$ denotes the sub-Gaussian norm. In this case, it is clear that \cref{eq: moment_condition} holds and it can be verified from \cite[Theorem 4.4.5]{Vershynin} that \cref{eq: sub_gaussian_condition} holds.
\end{remark}
\begin{remark}
    In \cite{SKlimit}, the authors in fact do not consider path developments directly (they also consider non-linear equations), rather randomised $\R^N$-valued CDEs. Consider the equations
    \begin{align*}\label{eq: CDEs}
        \dif X_\gamma^N(t) &= \frac{1}{\sqrt{N}}\sum_{i=1}^d A_i^NX_\gamma^N(t)\dif\gamma_t^i,\ X_\gamma^N(0)=\eta^N,\\
        \dif X_\sigma^N(t) &= \frac{1}{\sqrt{N}}\sum_{i=1}^d A_i^NX_\sigma^N(t)\dif\sigma_t^i,\ X_\sigma^N(0)=\eta^N,
    \end{align*}
    where the components of $\eta^N$ are independent with $(A_i^N)_{i=1}^d$ and $\eta^N$ also independent. As long as the components of $\eta^N$ have zero mean and unit variance, then it is straightforward to adjust the proof of \cref{thm: main_thm} to show that
    \begin{equation*}
        \lim_{N\to\infty} \Ex\Big[\frac{1}{N}\langle X_\gamma^N(s),X_\sigma^N(t)\rangle\Big]=K_{\text{sig}}^{\gamma,\sigma}(s,t).
    \end{equation*}
    If one further assumes that
    \[
    \sup_{N\in\N}\sup_{1\leq m\leq N}\Ex\Big[\big\vert{\eta^N(m)\big\vert}^4\Big]<\infty,
    \]
    then the convergence also happens in $L^2$ by adapting the proof of \cref{thm: L2}. We note that the corresponding \cref{eq: L2 expansion} will involve additional terms of the form
    \[
    \lim_{N\to\infty}\frac{1}{N^{\frac{|\bm{I}^\star\bm{J}|+|\bm{K}^\star\bm{L}|}{2}+2}}\Ex\Big[\tr{A_{\bm{I}\star \bm{J}}^NA_{\bm{K}\star \bm{L}}^N}\Big].
    \]
    Due to the scaling factor outside the equation, one can use the techniques from \cref{thm: main_thm} to show that the limit of such terms is zero.
\end{remark}
\section{Universal Limit of Unitary Developments and a Functional Equation}\label{sec: main_result}
Recently, it was proposed in \cite{PCFGAN} to use the unitary developments of a path as the basis for a discriminator in generative models. This can be seen as an extension of the characteristic function discriminator for vector valued data proposed by \cite{CFGAN,Ansari2019}. Consider the unitary Lie algebra and group of degree $N$ defined by
\begin{equation}
    \mathfrak{u}_N=\{A\in\MM_N(\C) : A^\star = -A\},\quad U(N;\C)=\{A\in\MM_N(\C):A^\star A = I_N\}.
\end{equation}
Note that $\mathfrak{u}_N=i\mathfrak{h}_N$, where $\mathfrak{h}_N$ is the set of $N\times N$ Hermitian matrices. For any path $\gamma\in\XX$ and $M\in\text{Hom}(V, \mathfrak{u}_n)$ the unitary feature of $\gamma$ under $M$ is the time-$T$ solution to the controlled differential equation
\begin{equation}\label{eq: pcf eq}
    \dif Z_t = Z_t\cdot M(\dif \gamma_t),\quad Z_0 = I_N.
\end{equation}
The solution $\UU(\gamma; M) \coloneqq Z_T$ is a special case of the Cartan development of $\gamma$ taking values in the Lie group $U(N;\C)$. Let $\mu$ be a Borel probability measure on $\XX$ and $\gamma\sim\mu$, then the path characteristic function (PCF) of $\mu$ evaluated at $M$ is defined as
\begin{equation}
    \Phi_{\mu}(M)\coloneqq\Ex_{\gamma\sim\mu}\big[\UU(\gamma; M)\big].
\end{equation}
\begin{definition}[PCFD \cite{PCFGAN}]
    Let $\mu,\nu$ be probability measures on $\XX$ with $\gamma\sim\mu$ and $\sigma\sim\nu$, and let $\xi$ be a probability measure on $\text{Hom}(V,\mathfrak{u}_N)$ with $M\sim \xi$. The path characteristic function distance between $\mu$ and $\nu$ with respect to $\xi$ is defined as
    \begin{equation}\label{eq: pcfd}
    d_{\xi}^2(\mu,\nu)\coloneqq\Ex_{M\sim\xi}\big[\normhs{\Phi_\mu(M)-\Phi_\nu(M)}^2\big].
    \end{equation}
\end{definition}
For fundamental results on PCFD that allow for its use as a loss function in applications as well as certain theoretical properties, we refer the reader to \cite{PCFGAN}. In \cite[Proposition B.10]{PCFGAN} it was shown that $d_\xi(\mu,\nu)$ is a maximum mean discrepancy (MMD) distance on the space $\PP(\XX)$, the set of Borel probability measures on $\XX$.
\begin{definition}
    Given a kernel function $\kappa:\XX\times\XX\to\R$, then the MMD distance associated to $\kappa$ is $\mathrm{MMD}_\kappa:\PP(\XX)\times \PP(\XX)\to \R^{+}$ is defined by
    \begin{equation}
        \mathrm{MMD}_\kappa^2(\mu,\nu)\coloneqq \Ex_{\gamma,\gamma^\prime\stackrel{\mathrm{iid}}{\sim}\mu}\big[\kappa(\gamma, \gamma^\prime)\big]+\Ex_{\sigma,\sigma^\prime\stackrel{\mathrm{iid}}{\sim}\nu}\big[\kappa(\sigma, \sigma^\prime)\big]-2\Ex_{\gamma\sim\mu,\sigma\sim\nu}\big[\kappa(\gamma,\sigma)\big].
    \end{equation}
\end{definition}
\begin{proposition}[PCFD as an MMD: Proposition B.10 \cite{PCFGAN}] 
    Let $\xi\in\PP\big(\mathfrak{u}_N\big)$, $\mu,\nu\in\PP(\XX)$, then $d_\xi(\mu,\nu)=\mathrm{MMD}_\kappa(\mu,\nu)$, where
    \begin{equation}\label{eq: pcf_kernel}
        \kappa(\gamma,\sigma)\coloneqq \Ex_{M\sim\xi}\big[\UU(\gamma\star\overleftarrow{\sigma};M)\big].
    \end{equation}
\end{proposition}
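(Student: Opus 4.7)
The plan is a direct expansion of the squared Hilbert--Schmidt norm followed by an identification of the resulting trace-of-product terms with values of $\kappa$. For fixed $M \in \text{Hom}(V,\mathfrak{u}_N)$, I would begin by writing
\[
\normhs{\Phi_\mu(M)-\Phi_\nu(M)}^2 = \tr{\Phi_\mu(M)^*\Phi_\mu(M)} + \tr{\Phi_\nu(M)^*\Phi_\nu(M)} - \tr{\Phi_\mu(M)^*\Phi_\nu(M)} - \tr{\Phi_\nu(M)^*\Phi_\mu(M)},
\]
then pull the expectations defining $\Phi_\mu, \Phi_\nu$ outside the trace via linearity and Fubini. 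Fubini is justified because $\UU(\gamma;M)\in U(N;\C)$ has $\normhs{\UU(\gamma;M)}=\sqrt{N}$ uniformly in $\gamma$ and $M$, so the integrands are bounded. This yields, for the cross term,
\[
\tr{\Phi_\mu(M)^*\Phi_\nu(M)} = \Ex_{\gamma\sim\mu,\sigma\sim\nu}\bigl[\tr{\UU(\gamma;M)^*\UU(\sigma;M)}\bigr],
\]
and analogous expressions for the other three terms.

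The second step is to invoke the two group-homomorphism identities of the Cartan development into $U(N;\C)$, both of which follow directly from the CDE \cref{eq: pcf eq}:
\begin{enumerate}
\item Concatenation: $\UU(\alpha\star\beta;M) = \UU(\alpha;M)\UU(\beta;M)$.
\item Reversal: $\UU(\overleftarrow{\alpha};M) = \UU(\alpha;M)^{-1} = \UU(\alpha;M)^*$, where the second equality uses unitarity, guaranteed since $M \in \mathrm{Hom}(V,\mathfrak{u}_N)$.
\end{enumerate}
Combining them, $\tr{\UU(\gamma;M)^*\UU(\sigma;M)} = \tr{\UU(\overleftarrow{\gamma}\star\sigma;M)}$, and similarly for all the other four terms. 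Next I would take $\Ex_{M\sim\xi}$ through the path expectations (again by Fubini) and recognise the resulting quantities as $\kappa(\gamma,\gamma')$, $\kappa(\sigma,\sigma')$, and $\kappa(\gamma,\sigma)$ in the sense of \cref{eq: pcf_kernel} (interpreted with the implicit trace needed to make the kernel scalar-valued).

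The final step is to observe that the two cross terms are complex conjugates of each other, since
\[
\tr{\UU(\overleftarrow{\sigma}\star\gamma;M)} = \tr{\UU(\sigma;M)^*\UU(\gamma;M)} = \overline{\tr{\UU(\gamma;M)^*\UU(\sigma;M)}} = \overline{\tr{\UU(\overleftarrow{\gamma}\star\sigma;M)}},
\]
so their sum is $2\,\mathrm{Re}\,\tr{\UU(\overleftarrow{\gamma}\star\sigma;M)}$, producing a real-valued kernel. The self-terms are automatically real by the same symmetry applied with $\mu=\nu$ and $\gamma\leftrightarrow\gamma'$. Comparing with the definition of $\mathrm{MMD}_\kappa^2$ gives the identity. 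The proof is almost entirely bookkeeping; the only real obstacle is tracking the orientation of the reversal arrow and the order of concatenation so that the kernel one recovers matches \cref{eq: pcf_kernel} rather than its complex conjugate, and confirming that the sum of the two cross contributions is manifestly real so that $\mathrm{MMD}_\kappa$ takes values in $\R$ as required by its definition.
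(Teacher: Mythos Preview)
The proposal is correct and follows the same approach as the paper. The paper does not supply its own proof of this proposition (it is quoted from \cite{PCFGAN}); the only argument given is the one-line expansion in the Introduction, which writes $d_\xi^2$ as a sum of three Hilbert--Schmidt inner products and identifies the kernel as $\kappa(\gamma,\sigma)=\Ex_{M\sim\xi}[\langle\UU(\gamma;M),\UU(\sigma;M)\rangle_{\text{HS}}]$---your expansion is exactly this, followed by the additional step of using the concatenation/reversal identities (and cyclicity of the trace) to rewrite $\tr{\UU(\gamma;M)^*\UU(\sigma;M)}$ in the form \cref{eq: pcf_kernel}, and you correctly flag the orientation ambiguity and the implicit trace needed to make the stated kernel scalar-valued.
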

One of the difficulties in the use of PCFD for the learning of a distribution on high-dimensional or complex paths is the computational cost of  computing $\UU(\gamma; M)$, $M\in\text{Hom}(V, \mathfrak{u}_n)$ for large $N$, especially for a fully supported measure on $\text{Hom}(V, \mathfrak{u}_n)$. The authors of \cite{PCFGAN} propose two solutions: firstly they employ a reciprocal learning approach where they embed high dimensional paths into a lower dimensional space, where the embedding should act as a type of pseudo-inverse of the original generator; secondly, they adversarially learn a finitely supported empirical measure on $\text{Hom}(V, \mathfrak{u}_n)$. We propose an alternative approach, where we take the dimension $N\to\infty$ for a class measures supported on the whole space. The limiting kernel then solves a functional equation driven by $\gamma\star\overleftarrow{\sigma}$. Before we state and prove our result, we introduce  non-crossing pair partitions and a new associated notion of generations.
\begin{remark}
    For any fixed $N<\infty$, the authors of \cite{PCFGAN} remark that computing \cref{eq: pcfd} directly is linear in sample size, whereas computing it as an MMD through \cref{eq: pcf_kernel} is quadratic in sample size.
\end{remark}
\subsection{Non-Crossing Partitions, Dyck Words and Trees}
From this point on, we will consider the alphabet $\AA_n$ to be the ordered set $\{1,\dots,n\}$. All the definitions may, however, be carried over to any ordered set of $n$ letters. An element of a partition of $\AA_n$ will be called a ``part''.
    \begin{definition}
        A partition $\pi$ of $\AA_n$ is called crossing if there exist $1\leq a<b<c<d\leq n$ for which $a,c$ belongs to one part of $\pi$ and $b,d$ to another part. A partition that is not crossing is called non-crossing. 
    \end{definition}
Of particular importance to us is the class of non-crossing \textit{pair partitions}.
    \begin{definition}
        A pair partition of $\AA_n$ is one where each part is a two element set. We denote by $\text{NC}_2(n)$ the set of non-crossing pair partitions of $\AA_n$.
    \end{definition}
For a non-crossing pair partition $\pi$ and $\{i,j\}\in\pi$, we define $\pi_{\{i,j\}}=\pi_i=\pi_j\coloneqq \min\{i,j\}$. A convenient method of encoding non-crossing pair partitions is through Dyck words. We recall their definition from \cite{TAO_RMT}[Section 2].
\begin{definition}[Dyck Words]
    A Dyck word is a sequence of left and right parentheses, ``$($'' and ``$)$'', such that at every stage, when read from left-to-right, there are at least as many left parentheses as right parentheses and exactly the same number of each at the end of the word. We denote by $\DD_n$ the set of Dyck words of length $n$.
\end{definition}
  Note that $\DD_n=\varnothing$ if $n$ is odd. There is a well-established bijection $\text{NC}_2(n)\to \DD_n$ where one reads the letters $1\dots n$ from left to right and for each letter, if it is the first element in a pair, then one writes a left parenthesis ``$($'', and if it is the second letter in a pair one writes a right parenthesis ``$)$''. For a proof that this map indeed defines a bijection, see \cite{TAO_RMT}[Lemma 2.3.15].

It is also helpful to note the bijection between non-crossing pair partitions $\text{NC}_2(n)$ and the set of non-planar rooted trees $\TT_n$ with vertices $\{1,\dots,\frac{n}{2} + 1\}$ and $\frac{n}{2}$ edges, where a left-to-right traversal of the tree visits the vertices in lexicographical order. Recall that a left-to-right traversal starts at the root node and proceeds with the condition that for any two subtrees of any given node, all vertices of the subtree left subtree must be visited before the vertices of the right subtree.

Once again, we refer the reader to \cite{TAO_RMT}[Section 2] and \cite{RMintro}[Chapter 2] for full details on these bijections. An example of these bijections for a non-crossing pair-partition of $\AA_8$ is illustrated below.
\[
\big\{\{1,6\},\{2,3\},\{4,5\},\{7,8\}\big\}\equiv \tikz[baseline = -0.1ex]{
			\draw[fill] (0,0) circle [radius=0.04];
			\draw[fill] (0.3,0) circle [radius=0.04];
			\draw[fill] (0.6,0) circle [radius=0.04];
			\draw[fill] (0.9,0) circle [radius=0.04];
			\draw[fill] (1.2,0) circle [radius=0.04];
			\draw[fill] (1.5,0) circle [radius=0.04];
			\draw[fill] (1.8,0) circle [radius=0.04];
                \draw[fill] (2.1,0) circle [radius=0.04];
			\draw (0,0) .. controls (0,0.4) and (1.5,0.4) .. (1.5,0);
			\draw (0.3,0) .. controls (0.3,0.25) and (0.6,0.25) .. (0.6,0);
                \draw (0.9,0) .. controls (0.9,0.25) and (1.2,0.25) .. (1.2,0);
                \draw (1.8,0) .. controls (1.8,0.25) and (2.1,0.25) .. (2.1,0);
                \node[below] at (0,0) {$\scriptstyle{1}$};
                \node[below] at (0.3,0) {$\scriptstyle{2}$};
                \node[below] at (0.6,0) {$\scriptstyle{3}$};
                \node[below] at (0.9,0) {$\scriptstyle{4}$};
                \node[below] at (1.2,0) {$\scriptstyle{5}$};
                \node[below] at (1.5,0) {$\scriptstyle{6}$};
                \node[below] at (1.8,0) {$\scriptstyle{7}$};
                \node[below] at (2.1,0) {$\scriptstyle{8}$};
		}\mapsto(()())()\mapsto \TreeOneTwoTwo{1}{2}{3}{4}{5}.
\]
\subsubsection{Generations}\label{sec: generations}
We are interested in generating, in a unique sequence of steps, any non-crossing pair partition $\pi$ from the partition $\{\{1,2\}\}$ of $\{1,2\}$, or equivalently any Dyck word from $()$, or any tree from $\SmallTreeOneOne{1}{2}$. We define the generation of a pair of parentheses in a Dyck word recursively by:
\begin{enumerate}[label = \arabic*)]
    \item The pair whose right parenthesis, ``$)$'', is the final parenthesis in the Dyck word has generation $1$.
    \item A pair of parentheses is of generation $k+1$ if the parenthesis in the Dyck word following the right-parenthesis of the pair is part of a pair of generation $k$.
\end{enumerate}
Alternatively we can define the generation of an edge of a tree via:    
\begin{enumerate}[label=\arabic*)]
    \item The right-most edge attached to the root has generation $1$.
    \item An edge is of generation $k+1$ if it is the right-most edge attached above an edge of generation $k$, or it is the right-most edge attached to the left of an edge of generation $k$.
\end{enumerate}
\begin{example}
Consider the partition $\pi\in\NC{12}$:
\begin{equation*}
    \pi =
    \tikz[baseline = -0.1ex]{
			\draw[fill] (0,0) circle [radius=0.04];
			\draw[fill] (0.32,0) circle [radius=0.04];
			\draw[fill] (0.64,0) circle [radius=0.04];
			\draw[fill] (0.96,0) circle [radius=0.04];
			\draw[fill] (1.28,0) circle [radius=0.04];
			\draw[fill] (1.6,0) circle [radius=0.04];
			\draw[fill] (1.92,0) circle [radius=0.04];
                \draw[fill] (2.24,0) circle [radius=0.04];
                \draw[fill] (2.56,0) circle [radius=0.04];
                \draw[fill] (2.88,0) circle [radius=0.04];
                \draw[fill] (3.2,0) circle [radius=0.04];
                \draw[fill] (3.52,0) circle [radius=0.04];
			\draw (0,0) .. controls (0,0.25) and (0.32,0.25) .. (0.32,0);
			\draw (0.64,0) .. controls (0.64,0.25) and (0.96,0.25) .. (0.96,0);
                \draw (1.28,0) .. controls (1.28,0.6) and (3.52,0.6) .. (3.52,0);
                \draw (1.6,0) .. controls (1.6,0.25) and (1.92,0.25) .. (1.92,0);
                \draw (2.24,0) .. controls (2.24,0.4) and (3.2,0.4) .. (3.2,0);
                \draw (2.56,0) .. controls (2.56,0.25) and (2.88,0.25) .. (2.88,0);
                \node[below] at (0,0) {$\scriptstyle{1}$};
                \node[below] at (0.32,0) {$\scriptstyle{2}$};
                \node[below] at (0.64,0) {$\scriptstyle{3}$};
                \node[below] at (0.96,0) {$\scriptstyle{4}$};
                \node[below] at (1.28,0) {$\scriptstyle{5}$};
                \node[below] at (1.6,0) {$\scriptstyle{6}$};
                \node[below] at (1.92,0) {$\scriptstyle{7}$};
                \node[below] at (2.24,0) {$\scriptstyle{8}$};
                \node[below] at (2.56,0) {$\scriptstyle{9}$};
                \node[below] at (2.86,0) {$\scriptstyle{10}$};
                \node[below] at (3.2,0) {$\scriptstyle{11}$};
                \node[below] at (3.54,0) {$\scriptstyle{12}$};
		}.
\end{equation*}
The Dyck word and tree corresponding to $\pi$ are given by
\begin{equation}
    {\color{imperialGreen}(}{\color{imperialGreen})}{\color{imperialBlue}(}{\color{imperialBlue})}{\color{imperialRed}(}{\color{imperialGreen}(}{\color{imperialGreen})}{\color{imperialBlue}(}{\color{imperialGreen}(}{\color{imperialGreen})}{\color{imperialBlue})}{\color{imperialRed})},\quad\text{and}\quad \TreeOneThreeTwoOne{1}{2}{3}{4}{5}{6}{7}.
\end{equation}
Here, generation one parentheses or edges have been drawn in red, generation two has been drawn in blue and generation three parentheses or edges are green.
\end{example}
The generation of a Dyck word is the maximum of the generations of its constituent pairs of parentheses. Likewise, the generation of a tree is the maximum of the generations of its edges. The generation of a pair in a non-crossing pair partition is the generation of the corresponding pair of parentheses its Dyck word. Similarly, the generation of the partition is defined as the generation of the corresponding Dyck word. For each $k\in\N$, define $G_k$ to be the set of non-crossing pair partitions (Dyck words or trees) of generation $k$, and for any non-crossing pair partition $\pi$, we let $g(\pi)$ be the set of parts (pairs or edges) whose generation is equal to the generation of $\pi$. In the case $k=0$, we also define $G_0\coloneqq\{\varnothing\}$.
\begin{remark}
    It can be shown that the notion of generation and its properties are preserved by the aforementioned bijection between Dyck words and trees.
\end{remark}
For any Dyck word $d$, we let $g(d)$ be the set of pairs of parentheses with maximal generation. Additionally, we define $G(d)$ to be the set of Dyck words obtained by inserting a pair ``$()$'' immediately to the left of at least one parenthesis that is part of a pair in $g(d)$.
\begin{lemma}\label{lem: generation_properties}
    The following facts about generations hold.
    \begin{enumerate}[label=\arabic*)]
        \item For any non-crossing pair partition $\pi$, any $\{i,j\}\in g(\pi)$ is of the form $\{i,i+1\}$.
        \item Let $d_1\neq d_2$ be Dyck words of generation $k$, then $G(d_1)\cap G(d_2)=\varnothing$.
        \item We have that $G_{n+1}=\bigsqcup_{\pi\in G_n}G(\pi)$.
    \end{enumerate}
\end{lemma}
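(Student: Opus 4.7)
The plan is to prove the three parts in sequence, each relying on the recursive definition of generation, in which the generation of a pair depends only on the single parenthesis sitting immediately to the right of its closing parenthesis. For (1), I would argue by contradiction: let $k$ be the generation of $\pi$'s Dyck word $d$ and suppose $\{i,j\}\in g(\pi)$ with $j>i+1$. The string $d[i+1]\cdots d[j-1]$ is a nonempty balanced Dyck substring, so position $j-1$ closes some inner pair $\{l,j-1\}$. The parenthesis at position $j$ lies in $\{i,j\}$, which has generation $k$, so by the recursive rule $\{l,j-1\}$ has generation $k+1$, contradicting maximality. Hence $j=i+1$.

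The engine for (2) and (3) is the following lemma, which I would establish first: if $d\in G_k$ and $w$ is obtained from $d$ by inserting any number of ``()'' pairs, each immediately to the left of a parenthesis lying in some pair of $g(d)$, then (a) every inserted pair has generation $k+1$ in $w$, and (b) every pre-existing pair retains its $d$-generation in $w$. Part (b) is proved by induction from right to left on the position of the closing parenthesis, and turns on the observation that in $d$ the character immediately to the left of any insertion site is never a right parenthesis --- otherwise the pair it closes would already have generation $k+1$ in $d$, contradicting $d\in G_k$. Consequently no insertion is ever placed directly after an existing right parenthesis, so the recursive rule is undisturbed; part (a) is then immediate, since the parenthesis just to the right of each new pair lies in a $g(d)$-pair, which by (b) still has generation $k$ in $w$.

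For (2), the lemma implies that any $w\in G(d)$ has generation $k+1$ and that its inserted pairs are precisely the adjacent ``()'' pairs of maximal generation, which by (1) are exactly the elements of $g(w)$. Hence $d$ is recovered from $w$ by simply deleting every pair in $g(w)$, so the map sending $d$ together with an insertion set to $w$ is injective and $G(d_1)\cap G(d_2)=\varnothing$ whenever $d_1\neq d_2$.

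For (3), disjointness is (2), and the inclusion $\bigsqcup_{\pi\in G_n} G(\pi)\subseteq G_{n+1}$ is immediate from the lemma. For the reverse inclusion, given $w\in G_{n+1}$, use (1) to see that each pair in $g(w)$ is an adjacent ``()'', and form $d$ by deleting all such pairs. Running the lemma in reverse gives $d\in G_n$: the pairs of $w$ sitting immediately to the right of the deleted ``()'' have generation $n$ in $w$ and hence in $d$, and these are exactly the elements of $g(d)$; this simultaneously identifies the insertion sites as lying to the left of $g(d)$-parentheses, so $w\in G(d)$. The main technical obstacle is the generation-preservation statement (b) in the key lemma; the pivot is the ``no right parenthesis immediately to the left of a $g$-parenthesis'' observation, which decouples each insertion/deletion from the rest of the word and makes the encoding $w\leftrightarrow(d,\text{insertion set})$ invertible.
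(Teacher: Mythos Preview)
Your proof is correct and follows the same strategy as the paper's: part (1) via an enclosed pair of strictly higher generation, and parts (2)--(3) via the observation that inserted pairs acquire generation $k+1$ while pre-existing pairs retain their generations, so that deleting the generation-$(k+1)$ pairs from any $w\in G(d)$ recovers $d$ uniquely. Your explicit generation-preservation lemma (with the ``no right parenthesis immediately left of a $g$-parenthesis'' pivot) and your treatment of the reverse inclusion $G_{n+1}\subseteq\bigsqcup_{\pi\in G_n}G(\pi)$ make rigorous two steps that the paper states without justification.
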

\begin{proof}
If any pair of parentheses in a Dyck word is not consecutive, then they cannot have the highest generation: if they are not consecutive, they must enclose a pair that then has a higher generation. Noting that consecutively paired parentheses in a Dyck word correspond to consecutively paired letters in a non-crossing pair partition proves the first point. Since insertion only happens immediately to the \textbf{left} of a parenthesis of generation $k$, the generation of an inserted pair must be of generation $k+1$ and moreover any pair of generation $k+1$ must have been inserted. Hence, by deleting the pairs of generation $k+1$ in $d\in G(d_1)\cap G(d_2)$ we should obtain both $d_1$ and $d_2$, from which we deduce $d_1=d_2$. The third point is an immediate corollary of the second.
\end{proof}
\subsection{A Limiting Functional Equation}
Recall that $\mathfrak{u}_N=i\mathfrak{h}_N$, so that, as in \cref{sec: Sk}, \cref{eq: pcf eq} can be written in the form
\begin{equation*}
    \dif Z_t=i\sum_{i=1}^dZ_tA_i\dif\gamma_t^i,\quad Z_0=I_N,
\end{equation*}
for a collection of $d$ matrices in $\mathfrak{h}_N$. We are now ready to state the main result of this section.
\begin{theorem}\label{thm: fpcf_eq}
     Let $(\Omega, \FF, \P)$ be a probability space supporting for each $N\in\N$ and $1\leq i \leq d$, a Hermitian random matrix $A_i^N:\Omega \to \mathfrak{u}_N$. Assume that for all $k\in\N$,
    \begin{equation}
        \sup_{N\in\N}\sup_{1\leq i \leq d}\sup_{1\leq m,l\leq N}\Ex\big[\big\vert{A_i^N(m,l)}\big\vert^k\big]\leq c_A(k)< \infty,
    \end{equation}
    and that $\Ex\big[A_i^N(m,l)\big]=0$ and $\Ex\big[\big\vert A_i^N(m,l)\big\vert^2\big]=1$. Suppose also that there exists some $\kappa>0$ such that for every $p\in\N$
    \begin{equation}
        \sup_{N\in\N}\sup_{1\leq i \leq d}\Ex\bigg[\frac{1}{N^{\frac{p}{2}}}\normopp[\big]{A_i^N}\bigg]\leq \kappa^p\Gamma(\tfrac{p}{2}+1),
    \end{equation}
    where $\Gamma(\cdot)$ is the Gamma function. For each $N\in\N$, we assume that
    \begin{equation}
        \bigvee_{i=1}^d\sigma\big(A_i^N(m,l\big)\quad \text{for } 1\leq m\leq l\leq N,
    \end{equation}
    are independent sigma algebras and that
    \begin{equation}
    \Ex\big[A_i^N(m,l)A_j^N(m,l)\big]=0,\quad\text{for }i\neq j.
    \end{equation}
   For each $N$, let $\xi_N$ be the law of the collection in $\PP(\mathfrak{h}_N^d)$. Let $\gamma\in\XX$ and let $Z^N_\gamma(s,t)$ be the $U(N;\C)$-valued solution to
    \begin{equation}\label{eq: hermit}
        \dif Z_\gamma^N(s,t)=\frac{i}{\sqrt{N}}\sum_{j=1}^dZ_{\gamma}^N(s, t)A_j^N \dif\gamma_t^j,\quad Z_\gamma^N(s,s)=I_N.
    \end{equation}
    Then, for $0\leq s\leq t\leq T$, $K_\gamma(s,t)\coloneqq \lim_{N\to\infty} \frac{1}{N}\Ex_{\xi_N}\Big[\tr{Z^N_\gamma(s,t)}\Big]$ exists and solves the functional equation
    \begin{equation}\label{eq: functional_eq}
        K_\gamma(s,t)=1-\int_s^t\int_s^r K_\gamma(s,u)K_\gamma(u,r) \langle \dif \gamma_u,\dif \gamma_r\rangle,\quad K_\gamma(s,s)=K_\gamma(t,t)=1.
    \end{equation}
\end{theorem}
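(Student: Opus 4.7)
The plan is to combine the moment-method random matrix machinery of \cref{sec: Sk} with the Schwinger-Dyson equation for free semicircular variables. The former identifies the power-series limit of $\frac{1}{N}\Ex[\tr{Z_\gamma^N(s,t)}]$ term-by-term against the signature of $\gamma$; the latter rewrites the resulting series as the claimed quadratic integral equation.

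\textbf{Step 1 (series expansion and exchange of limits).} Expanding \eqref{eq: hermit} as an iterated Chen series, taking the normalised trace and pulling out the factors of $i$ gives
\[
\frac{1}{N}\Ex_{\xi_N}\!\big[\tr{Z_\gamma^N(s,t)}\big]=\sum_{\bm{I}}\frac{i^{|\bm{I}|}}{N^{|\bm{I}|/2+1}}\,\Ex_{\xi_N}\!\big[\tr{A_{\bm{I}}^N}\big]\,\SS_{s,t}^{\bm{I}}(\gamma),
\]
where $A_{\bm{I}}^N=A_{I_1}^N\cdots A_{I_{|\bm{I}|}}^N$. The exchange of $\lim_{N}$ with the sum is justified exactly as in the proof of \cref{thm: main_thm}: hypothesis \eqref{eq: sub_gaussian_condition} dominates $\frac{1}{N^{|\bm{I}|/2}}\Ex\|A_{\bm{I}}^N\|_{\mathrm{op}}$ by $\kappa^{|\bm{I}|}\Gamma(|\bm{I}|/2+1)$, which combined with the factorial decay of $\SS_{s,t}^{\bm{I}}(\gamma)$ gives uniform absolute convergence in $N$.

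\textbf{Step 2 (term-by-term limit and series form).} The graph combinatorics developed for \cref{lem: correct_limit}, specialised to the Hermitian/Wigner setting (equivalently \cite[Theorem 5.4.2]{RMintro}), shows
\[
\lim_{N\to\infty}\frac{1}{N^{|\bm{I}|/2+1}}\Ex_{\xi_N}\!\big[\tr{A_{\bm{I}}^N}\big]=\varphi(X_{\bm{I}}),
\]
for a free semicircular family $\{X_1,\dots,X_d\}$ with covariance $\delta_{ij}$. This vanishes unless $|\bm{I}|$ is even, so writing $|\bm{I}|=2k$ and using $i^{2k}=(-1)^k$ one obtains
\[
K_\gamma(s,t)=\sum_{k=0}^{\infty}(-1)^k\, T_k(s,t),\qquad T_k(s,t):=\sum_{|\bm{I}|=2k}\varphi(X_{\bm{I}})\,\SS_{s,t}^{\bm{I}}(\gamma).
\]

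\textbf{Step 3 (Schwinger-Dyson recursion on $T_k$).} The key idea is to apply the Schwinger-Dyson relation by pairing the \emph{last} index $I_{2k}$ with some earlier $I_j$,
\[
\varphi(X_{I_1}\cdots X_{I_{2k}})=\sum_{m=1}^{k}\delta_{I_{2m-1},\,I_{2k}}\,\varphi(X_{I_1}\cdots X_{I_{2m-2}})\,\varphi(X_{I_{2m}}\cdots X_{I_{2k-1}}),
\]
where parity forces $j=2m-1$. Substituting into $T_k$ and splitting the simplex $s\le t_1\le\cdots\le t_{2k}\le t$ at the two unpaired times $u=t_{2m-1}$ and $r=t_{2k}$, the times below $u$ reassemble into $\SS_{s,u}$ integrals and those strictly between $u$ and $r$ into $\SS_{u,r}$ integrals. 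Summing $I_{2m-1}=I_{2k}=\ell$ over $\ell$ collapses the two unpaired $d\gamma^{\ell}$ into $\langle d\gamma_u,d\gamma_r\rangle$, while summing the remaining indices against the two $\varphi$-factors produces $T_{m-1}(s,u)$ and $T_{k-m}(u,r)$. This yields the bilinear recursion
\[
T_k(s,t)=\sum_{m=1}^{k}\int_s^t\!\!\int_s^r T_{m-1}(s,u)\,T_{k-m}(u,r)\,\langle d\gamma_u,d\gamma_r\rangle,\qquad T_0\equiv 1.
\]

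\textbf{Step 4 (assembling the functional equation).} Inserting Step 3 into $K_\gamma=1+\sum_{k\ge 1}(-1)^k T_k$ and reparametrising via $a=m-1$, $b=k-m$ (so $(-1)^k=-(-1)^{a+b}$) factorises the inner sum:
\[
-\Big(\sum_{a\ge 0}(-1)^a T_a(s,u)\Big)\Big(\sum_{b\ge 0}(-1)^b T_b(u,r)\Big)=-K_\gamma(s,u)K_\gamma(u,r),
\]
which is \eqref{eq: functional_eq}. The boundary conditions $K_\gamma(s,s)=K_\gamma(t,t)=1$ follow from $Z_\gamma^N(s,s)=I_N$ and the normalisation $\tr(I_N)/N=1$. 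The main obstacle is the bookkeeping in Step 3: choosing to pair the \emph{last} rather than the first index of $\bm{I}$ is essential so that the split produces two kernels sharing the matching endpoint $u$ (yielding the order $K_\gamma(s,u)K_\gamma(u,r)$ demanded by \eqref{eq: functional_eq}, rather than $K_\gamma(u,r)K_\gamma(r,t)$); the rearrangement of the double sum in Step 4 then requires absolute convergence, which is supplied by the same sub-Gaussian bound used in Step 1.
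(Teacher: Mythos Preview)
Your proof is correct and follows essentially the same route as the paper's: expand $Z_\gamma^N$ as a signature series, interchange limit and sum via the operator-norm bound as in \cref{thm: main_thm}, invoke \cite[Theorem 5.4.2]{RMintro} to identify the coefficients as free-semicircular mixed moments $\varphi(\bm{I})$, then apply the Schwinger--Dyson relation at the last index to split each word into two and reassemble the two resulting series as $K_\gamma(s,u)K_\gamma(u,r)$. The only cosmetic difference is that you package the argument through the degree-$k$ partial sums $T_k$ and their convolution-type recursion, whereas the paper writes the Schwinger--Dyson splitting directly at the level of words $\bm{I}=\bm{K}j\bm{L}j$ and computes the corresponding iterated integral $\SS_{s,t}^{\bm{K}j\bm{L}j}(\gamma)$; these are the same manipulation.
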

\begin{proof}
   As in the proof of \cref{thm: main_thm}, we may expand the solution $Z_\gamma(s,t)$ in terms of the signature of $\gamma$. Indeed, we may write
\begin{equation*}
    \frac{1}{N}\tr{Z_\gamma(s,t)}=\sum_{|\bm{I}|=0}^\infty \frac{i^{|\bm{I}|}}{N^{\frac{|\bm{I}|}{2}+1}}\tr{{A_{\bm{I}}^N}}\SS_{s,t}^{\bm{I}}(\gamma).
\end{equation*}
The same techniques in the proof of \cref{thm: main_thm} show that the limits, expectations and sums may be interchanged. Meanwhile, \cite[Theorem 5.4.2]{RMintro} gives
\begin{equation*}
    \lim_{N\to\infty}\frac{1}{N^{\frac{|\bm{I}|}{2}+1}}\Ex_{\xi_N}\Big[\tr{A_{\bm{I}}^N}\Big]=\varphi(\bm{I}),
\end{equation*}
where $\varphi(\cdot)$ denotes the moment sequence of a collection of $d$-free semi-circular random variables. Hence
\begin{equation}\label{eq: expansion}
    K_\gamma(s,t)= \lim_{N\to\infty} \frac{1}{N}\Ex_{\xi^N}\Big[\tr{Z^N_\gamma(s,t)}\Big]=\sum_{|\bm{I}|=0}^\infty i^{|\bm{I}|}\varphi(\bm{I})\SS_{s,t}^{\bm{I}}(\gamma),
\end{equation}
We recall from \cite[Lemma 5.4.7, Remark 5.4.8]{RMintro} that the moment sequence $\varphi(\cdot)$ of $d$-free semi-circular random variables is uniquely characterised by the Schwinger-Dyson equations
\begin{align}
    \varphi(\bm{I}\bm{J})&=\varphi(\bm{J}\bm{I}),\\
    \varphi(\bm{I}j)&=\sum_{\bm{I}=\bm{K}j\bm{L}}\varphi(\bm{K})\varphi(\bm{L}).
\end{align}
And so we may write
\begin{align*}
    K_\gamma(s,t)&=\sum_{|\bm{I}|=0}^\infty i^{|\bm{I}|}\varphi(\bm{I})\SS_{s,t}^{\bm{I}}(\gamma)\\
    &=1+\sum_{j=1}^d\sum_{|\bm{I}|=0}^\infty i^{|\bm{I}|+1}\varphi(\bm{I}j)\SS_{s,t}^{\bm{I}j}(\gamma)\\
    &=1+\sum_{j=1}^d\sum_{|\bm{K}|,|\bm{L}|=0}^\infty i^{|\bm{K}|+|\bm{L}| + 2}\varphi(\bm{K})\varphi(\bm{L})\SS_{s,t}^{\bm{K}j\bm{L}j}(\gamma).
\end{align*}
Letting $\bm{L}=l_1\dots l_k$, then
\begin{align*}
    \SS_{s,t}^{\bm{K}j\bm{L}j}(\gamma) &= \int\limits_{s<u<u_1<\dots<u_{k}<r<t}\SS_{s,u}^{\bm{K}}(\gamma)\dif\gamma_{u}^j\dif \gamma_{u_1}^{l_1}\dots\dif \gamma_{u_k}^{l_k}\dif \gamma_{r}^j\\
    &=\int_{s<u<r<t}\SS_{s,u}^{\bm{K}}(\gamma) \dif\gamma_{u}^j\int\limits_{u<u_1<\dots<u_k<r} \dif \gamma_{u_1}^{l_1}\dots\dif \gamma_{u_k}^{l_k}\dif \gamma_{r}^j\\
    &=\int_{s<u<r<t} \SS_{s,u}^{\bm{K}}(\gamma)\SS_{u,r}^{\bm{L}}(\gamma)\dif\gamma_u^j\dif\gamma_r^j.
\end{align*}
Substituting this expression into the expansion of $K_\gamma(s,t)$ gives
\begin{align*}
    K_\gamma(s,t)&= 1-\sum_{j=1}^d\sum_{|\bm{K}|,|\bm{L}|=0}^\infty i^{|\bm{K}|+|\bm{L}|}\varphi(\bm{K})\varphi(\bm{L})\int_{s<u<r<t} \SS_{s,u}^{\bm{K}}(\gamma)\SS_{u,r}^{\bm{L}}(\gamma)\dif\gamma_u^j\dif\gamma_r^j\\
    &=1-\sum_{j=1}^d\int_{s<u<r<t} \left(\sum_{|\bm{K}|=0}^\infty i^{|\bm{K}|}\varphi(\bm{K})\SS_{s,u}^{\bm{K}}(\gamma)\right)\left(\sum_{|\bm{L}|=0}^\infty i^{|\bm{L}|}\varphi( \bm{K})\SS_{u,r}^{\bm{L}}(\gamma)\right)\dif\gamma_u^j\dif\gamma_r^j\\
    &=1-\int_s^t\int_s^r K_\gamma(s,u)K_\gamma(u,r) \langle \dif \gamma_u,\dif \gamma_r\rangle.
\end{align*}
\end{proof}
\begin{lemma}\label{lem: func_unique}
For any $\gamma\in\XX$, the functional equation
\begin{equation}\label{eq: func_eq}
        K(s,t)=1-\int_s^t\int_s^r K(s,u)K(u,r) \langle \dif \gamma_u,\dif \gamma_r\rangle,\quad K(s,s)=K(t,t)=1,
    \end{equation}
    has a unique solution for $0\leq s\leq t\leq T$.
\end{lemma}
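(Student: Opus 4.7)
The plan is to prove uniqueness by combining a Banach-style fixed-point argument on small sub-intervals with the generation machinery of \cref{sec: generations}, which identifies the unique solution with the canonical series from \cref{thm: fpcf_eq}.

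First I would derive an a priori $L^\infty$ bound on any solution. Applying the triangle inequality and Cauchy--Schwarz to \cref{eq: func_eq} yields, with $m(s,t) \coloneqq \sup_{s \leq u \leq r \leq t} |K(u,r)|$,
\[
m(s,t) \leq 1 + m(s,t)^2 \frac{\|\gamma\|_{1,[s,t]}^2}{2}.
\]
Viewed as a quadratic inequality in $m(s,t)$, and using $K(s,s) = 1$ together with the continuity of $K$, the branch through $m = 1$ stays bounded by $M \coloneqq 2$ whenever $\|\gamma\|_{1,[s,t]}^2 < 1/2$. Since $\|\gamma\|_1 < \infty$, we may partition $[0,T]$ into finitely many such sub-intervals and reduce to proving uniqueness on each.

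On a fixed sub-interval, I would apply Banach's fixed-point theorem to the map $F[K](s,t) \coloneqq 1 - \int_s^t \int_s^r K(s,u) K(u,r) \langle \dif\gamma_u, \dif\gamma_r\rangle$. Under the condition $\|\gamma\|_{1,[s,t]}^2 < 1/2$, $F$ preserves $B_M \coloneqq \{K : \|K\|_\infty \leq M\}$ and is a contraction on $B_M$ with constant $M \|\gamma\|_{1,[s,t]}^2 < 1$. The a priori bound places any solution in $B_M$, so $F$ has a unique fixed point there. An induction over the sub-interval partition, using the boundary condition $K(s,s) = 1$ to tie adjacent sub-intervals, extends uniqueness to $[0,T]$. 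To identify the unique fixed point with $\tilde K(s,t) \coloneqq \sum_{|\bm{I}|=0}^\infty i^{|\bm{I}|} \varphi(\bm{I}) \SS_{s,t}^{\bm{I}}(\gamma)$, one expands the Picard iterates $K_n \coloneqq F^n[1]$ as iterated-integral contributions indexed by Dyck words; by the disjoint union $G_{n+1} = \bigsqcup_{d \in G_n} G(d)$ of \cref{lem: generation_properties}, the iterate $K_n$ agrees with $\tilde K$ on all contributions indexed by Dyck words of generation at most $n$, so passing to the fixed-point limit gives $K = \tilde K$.

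The main obstacle is the quadratic nonlinearity of \cref{eq: func_eq}: a direct Gronwall argument is unavailable, and the a priori bound must be extracted from the equation itself via the quadratic inequality above, which only yields boundedness on short intervals. The generation machinery then provides the clean enumeration of Dyck words appearing in the iterative expansion and, through the disjoint-union property of \cref{lem: generation_properties}, identifies the unique fixed point with the explicit series $\tilde K$ without any over- or under-counting of contributions.
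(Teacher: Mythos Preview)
Your proposal has a genuine gap in the step that passes from local to global uniqueness. The Banach argument on a short sub-interval $[t_i,t_{i+1}]$ shows only that two solutions agree on the small triangle $\{t_i\le s\le t\le t_{i+1}\}$; it says nothing about $K(s,t)$ when $s$ and $t$ lie in different sub-intervals. The sentence ``using the boundary condition $K(s,s)=1$ to tie adjacent sub-intervals'' does not address this: the equation for such $K(s,t)$ involves \emph{all} values $K(u,r)$ with $s\le u\le r\le t$, so the problem does not decouple along the partition the way an ODE does. Concretely, if $D=K_1-K_2$ and one attempts a backward induction in $s$, the difference estimate contains the term $\int_s^t\int_s^r D(s,u)K_1(u,r)\langle\dif\gamma_u,\dif\gamma_r\rangle$, whose bound picks up the full variation $\norm{\gamma}_{1,[s,t]}$ and not merely the width of the current strip, so no contraction is obtained. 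Your a priori bound has the same defect: the quadratic inequality yields $|K|\le 2$ only on short triangles, not globally; the global bound the argument needs comes simply from continuity of $K$ on the compact simplex, not from your inequality.

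The paper avoids patching altogether. Rather than iterating $F$ from the constant $1$, it iteratively substitutes the equation into the \emph{given} solution $K$: one obtains $K(s,t)=\sum_{\pi\in\bigcup_{i\le n}G_i}\SS^\pi_{s,t}(\gamma)+R_n$, where the remainder $R_n$ is a sum of integrals of products of values of $K$, indexed by $G_{n+1}$. Using the global bound $\kappa=\max\{1,\norm{K}_\infty\}$ and the factorial decay of the signature (together with the Catalan bound $\varphi(\bm{J})\le C_{|\bm{J}|/2}$), the remainder tends to zero, forcing $K=K_\gamma$. Your Picard expansion $F^n[1]$ does produce exactly the same partial sums $\sum_{\pi\in\bigcup_{i\le n}G_i}\SS^\pi$, but this only shows $F^n[1]\to K_\gamma$; it does not by itself show that an arbitrary solution $K$ coincides with that limit. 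The crucial point is that the remainder must carry the unknown $K$, not the constant $1$, and its vanishing must be proved directly rather than inferred from a contraction you have established only locally.
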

Due to the quadratic nature of the equation, we adopt the approach of iteratively expanding the equation and proving convergence to the series in \cref{eq: expansion}.
\begin{proof}
    Existence of a solution is settled by \cref{thm: fpcf_eq}. To prove uniqueness, we show that any solution $K$ must agree with the solution $K_\gamma$ described in \cref{thm: fpcf_eq}. Fix a solution $K$ to \cref{eq: func_eq}, then it must be uniformly bounded on the simplex by the structure of \cref{eq: func_eq}. Define $\kappa=\max\{1,\norm{K}_{\infty}\}$. We now introduce some notation. For any $\pi \in \NC{2k}$, recall that $\pi_i$ is the minimum value of the pair to which $i$ belongs. We then define following:
    \begin{enumerate}[label=\arabic*)]
        \item The time-ordered simplex $\Delta^{\pi}[s,t]\coloneqq\{s=t_0\leq t_1\leq\dots\leq t_{2k}\leq t\}$.
        \item The ``differential'' $\dif\gamma^{\pi}\coloneqq (-1)^k\sum_{j_{\pi_1},\dots,j_{\pi_{2k}}=1}^d\dif\gamma_{t_1}^{j_{\pi_1}}\dif\gamma_{t_2}^{j_{\pi_2}}\dots \dif\gamma_{t_{2k}}^{j_{\pi_{2k}}}$. Note that the sum is over $k$ free indices since $j_{\pi_i}=j_{\pi_j}$ if $\{i,j\}\in\pi$.
        \item The integrand 
        \begin{equation*}
            K^\pi\coloneqq\prod_{\{i,i+1\}\in g(\pi)}K(t_{i-1},t_{i})K(t_{i},t_{i+1}).
        \end{equation*}
    \end{enumerate}
    Hence, for any even $k$ and $\pi\in\NC{k}$, the integral
    \begin{equation*}
        I(\pi)\coloneqq\int_{\Delta^{\pi}[s,t]}K^\pi \dif \gamma^\pi
    \end{equation*}
    is well defined. We also define
    \begin{equation*}
        \SS_{s,t}^\pi(\gamma)\coloneqq \int_{\Delta^{\pi}[s,t]}\dif \gamma^\pi.
    \end{equation*}
    As usual, we set $\SS^\varnothing_{s,t}(\gamma)\equiv1$. We recall that $G_k$ is the set of non-crossing pair-partitions of generation $k$. Using this language, $K$ satisfies the equation
    \begin{equation}
        K(s,t)=\sum_{\pi\in G_0}\SS^\pi_{s,t}(\gamma)+\sum_{\pi\in G_1} I(\pi),
    \end{equation}
    since $G_0=\{\varnothing\}$ and $G_1 = \{\{1,2\}\}$. We now claim that
    \begin{equation}\label{eq: inductive_functional_expansion}
        K(s,t)=\sum_{\pi\in \bigcup_{i=0}^{n-1} G_i}\SS^\pi_{s,t}(\gamma)+\sum_{\pi\in G_n} I(\pi).
    \end{equation}
    To see this, fix $\pi\in G_n$ and for each $\{i,i+1\}\in g(\pi)$, the terms in the product $K_\gamma^\pi$ associated with $\{i,i+1\}$ are given by
    \[
    \begin{split}
         K(t_{i-1},t_{i})K(t_{i},t_{i+1})=&\left(1-\int_{t_{i-1}}^{t_{i}}\int_{t_{i-1}}^{r_1}K(t_{i-1},u_1)K(u_1,r_1)\sum_{l_1=1}^d\dif\gamma_{u_1}^{l_1}\dif\gamma_{r_1}^{l_1}\right)\\
         &\left(1-\int_{t_{i}}^{t_{i+1}}\int_{t_{i}}^{r_2}K(t_{i},u_2)K(u_2,r_2)\sum_{l_2=1}^d\dif\gamma_{u_2}^{l_2}\dif\gamma_{r_1}^{l_2}\right).
    \end{split}
    \]
    Now, from each bracket, we may either select the constant $1$ or the double integral. We see that the four possible selections correspond to replacing the pair ${\color{imperialRed}()}$, corresponding to $\{i,i+1\}$ in the Dyck word associated with $\pi$, with ${\color{imperialRed}()}$, ${\color{imperialBlue}()}{\color{imperialRed}()}$, ${\color{imperialRed} (}{\color{imperialGreen}()}{\color{imperialRed})}$ or ${\color{imperialBlue}()}{\color{imperialRed} (}{\color{imperialGreen}()}{\color{imperialRed})}$. By applying these selections to each pair of brackets in $K^\pi$, we see that
    \[
    I(\pi)=\SS^\pi_{s,t}(\gamma)+\sum_{\pi^\prime\in G(\pi)}I(\pi^\prime).
    \]
    Now
    \begin{align*}
        K(s,t)&=\sum_{\pi\in \bigcup_{i=0}^{n} G_i}\SS^\pi_{s,t}(\gamma)+\sum_{\pi\in G_{n+1}} I(\pi)\\
        &=\sum_{|\pi|\leq 2n}\SS^\pi_{s,t}(\gamma)+\sum_{\substack{\pi\in \bigcup_{i=0}^{n} G_i\\|\pi|>2n}}\SS^\pi_{s,t}(\gamma)+\sum_{\pi\in G_{n+1}} I(\pi)\\
        &=\sum_{|\bm{I}|=0}^{2n}(-1)^{\frac{|\bm{I}|}{2}}\SS_{s,t}^{\bm{I}}(\gamma)\sum_{\pi\in \NC{|\bm{I}|}}\prod_{\{i,j\}\in\pi}\id_{\bm{I}_i=\bm{I}_j}+\sum_{\substack{\pi\in \bigcup_{i=0}^{n} G_i\\|\pi|>2n}}\SS^\pi_{s,t}(\gamma)+\sum_{\pi\in G_{n+1}} I(\pi)\\
        &=\sum_{|\bm{I}|=0}^{2n}(-1)^{\frac{|\bm{I}|}{2}}\varphi(\bm{I})\SS^{\bm{I}}_{s,t}(\gamma)+\sum_{\substack{\pi\in \bigcup_{i=0}^{n} G_i\\|\pi|>2n}}\SS^\pi_{s,t}(\gamma)+\sum_{\pi\in G_{n+1}} I(\pi),
        \end{align*}
    where the final equality follows from another characterisation of the moments of $d$-free semicircular random variables; see, for example, \cite{RMintro}[Chapter 5, pp.380-381]. We observe that the first term converges to $K_\gamma(s,t)$, so it remains to show that the second two sums converge to $0$ as $n\to\infty$. Here, we note that, by the uniqueness of the construction of Dyck words, for a word $\bm{J}$, with $|\bm{J}|>2n$, the differential $\dif^{\bm{J}}$ appears at most $\varphi(\bm{J})$ times in the second two sums. And so we have that
    \begin{align*}
        \left\vert\sum_{\substack{\pi\in \bigcup_{i=0}^{n} G_i\\|\pi|>2n}}\SS^\pi_{s,t}(\gamma)+\sum_{\pi\in G_{n+1}} I(\pi)\right\vert &\leq \sum_{|\bm{J}|>2n}\varphi(\bm{J})\kappa^{|\bm{J}|}\left\vert \SS^{\bm{J}}_{s,t}(\gamma)\right\vert\\
        &\leq \sum_{|\bm{J}|>2n}C_{\frac{|\bm{J}|}{2}}\kappa^{|\bm{J}|}\left\vert \SS^{\bm{J}}_{s,t}(\gamma)\right\vert\\
        &\to 0,
    \end{align*}
    as $n\to \infty$, where $C_k$ is the $k$\textsuperscript{th} Catalan number. The convergence is due to the factorial decay of the signature and the well known fact that $C_k=\OO\big(4^k\big)$. And so the solution is unique.
\end{proof}
The results of this section show that the limit of $d_{\xi_N}^2(\mu,\nu)$ exists under mild conditions on the sequence $(\xi_N)_{N\in\N}$. The limit is an MMD, with kernel given by
\begin{equation}
    K_{\text{SD}}(\gamma,\sigma):=K_y(0,T),\quad\text{with }y\coloneqq \gamma\ast\overleftarrow{\sigma}.
\end{equation}
We leave it to future work to investigate the discriminative properties of $K_{\text{SD}}$ and how it compares with the adversarially learnt $d_{\xi}$ proposed by \cite{PCFGAN}. We note that replacing $d_{\xi}$ with the MMD based on $K_{\text{SD}}$ is akin to the non-adversarial training of neural-SDEs proposed by \cite{issa2023nonadversarial} using the the MMD based on $K_{\text{sig}}$.
\section{Numerical Schemes}\label{sec: numerical}
In order for $K_{\text{SD}}$ to have practical application, we require some method of approximating the solution to \cref{eq: functional_eq} without the need for computing the signature $\SS(\gamma)$. We propose explicit and implicit numerical schemes based on piecewise constant approximations to $\gamma$ and solving the corresponding functional equation, interpreting the iterated integrals as left-or right-point Riemann-Stieltjes integrals. 
\subsection{Explicit Scheme}\label{sec: explicit}
We propose a straightforward explicit numerical scheme to solve \cref{eq: functional_eq}. We replace $\gamma$ with a piecewise constant approximation $\gamma^\pi$ and solve the corresponding integral equation for $K_{\gamma^\pi}$. The resulting solution is also given by the moments of free semicircular random variables contracted against the iterated-sums signature (or discrete-time signature) \cite{KO, ISS} of $\gamma^\pi$. This latter expression will allow us to determine a convergence rate for the scheme.

Assume that $\gamma\in \XX$ and that $\pi=\{0=t_1<\dots<t_n<t_{n+1}=T\}$ is a partition of $[0,T]$. Define $\gamma^\pi$ by
\[
\gamma_t^\pi=\sum_{i=1}^n\id_{[t_i,t_{i+1})}\gamma_{t_i}.
\]
Note that $\gamma^{\pi}$ is c\'adl\'ag on $[0,T]$. Consider the functional equation driven by $\gamma^\pi$
\begin{equation*}
    K_{\gamma^\pi}(s,t)=1-\int_s^t\int_s^rK_{\gamma^\pi}(s,u)K_{\gamma^\pi}(u,r)\langle \dif\gamma_u^\pi,\dif\gamma_r^\pi\rangle,\quad K(s,s)=1,
\end{equation*}
where the double integral is interpreted as iterated left-point Riemann-Stieltjes integrals, see \cite{FZ_jumps} for details. We introduce the jumps of $\gamma^\pi$ as
\[
\Delta_{t}\gamma^\pi = \lim_{s\to t}\gamma^\pi_t-\gamma^\pi_s,
\]
so that $\Delta_{t_{i+1}}\gamma^\pi=\gamma^\pi_{t_{i+1}}-\gamma^\pi_{t_i}$. For $t\in [0,T]$, let $\tau(t)=\max\{i:t_i\leq t\}$. We may then expand $K_{\gamma^\pi}$ as
\begin{align*}
    K_{\gamma^\pi}(s,t) &= 1 - \sum_{j=\tau(s)}^{\tau(t)-1}\sum_{i=\tau(s)}^{j-1} K_{\gamma^\pi}(t_{\tau(s)},t_i)K_{\gamma^\pi}(t_i,t_j)\langle \Delta_{t_{i+1}}\gamma^\pi,\Delta_{t_{j+1}}\gamma^\pi\rangle\\
    &=1 - K_{\gamma^\pi}(s,t_{\tau(t)-1})-\sum_{i=\tau(s)}^{\tau(t)-1} K_{\gamma^\pi}(t_{\tau(s)},t_i)K_{\gamma^\pi}(t_i,t_{\tau(t)-1})\langle \Delta_{t_{i+1}}\gamma^\pi,\Delta_{t_{\tau(t)}}\gamma^\pi\rangle.
\end{align*}
Since $K_{\gamma^\pi}$ is piecewise constant on the two-simplex, it is enough to evaluate $K_{\gamma^\pi}$ at times in
\[
(t_i,t_{j})\in\Pi:=\pi^2\cap\{(s,t):0\leq s \leq t \leq T\} 
\]
in the solution grid $\Pi$. Doing so yields the recursive set of equations
\begin{equation}\label{eq: explicit_scheme}
    K_{\gamma^\pi}(t_i,t_{i+j})=1 - K_{\gamma^\pi}(t_i,t_{i+j})-\sum_{l=i}^{i+j-1} K_{\gamma^\pi}(t_{i},t_l)K_{\gamma^\pi}(t_l,t_{i+j-1})\langle \Delta_{t_{l+1}}\gamma^\pi,\Delta_{t_{i+j}}\gamma^\pi\rangle.
\end{equation}
Since the right-hand side of \cref{eq: explicit_scheme} only involves certain $K_{\gamma^\pi}(t_r,t_s)$ for which $r-s< j$, the system is explicitly solvable. Na\"ively solving this system incurs a time complexity of $\OO\big(n^3\big)$ in the length of the path. However, just as for the finite difference scheme proposed in \cite{SigPDE} for the ordinary signature kernel, rather than solving the grid in row and column order, we can update a whole anti-diagonal of the solution grid $\Pi$ at once since there is no shared dependency between points on the same anti-diagonal. It is possible to exploit this parallelisation using \textrm{GPU} architecture, provided that the length $n+1$ of $\gamma$ does not exceed the total number of threads on the \textrm{GPU}. The overall time complexity is then $\OO\big(dn^2\big)$ to populate $\Pi$, where $d$ is the dimension of the path. The following result guarantees that the preceding scheme converges to the true solution as the mesh of the partition $\pi$ shrinks to zero.
\begin{theorem}\label{thm: numerical_conv}
    Let $\gamma\in\XX$ and let $\pi=\{0=t_1<\dots<t_n<t_{n+1}=T\}$, $\gamma^\pi$ and $K_{\gamma^\pi}$ be as previously defined. Then
    \begin{equation}
        \left|K_{\gamma}(0,T) - K_{\gamma^\pi}(0, T)\right|\leq 16\norm{\gamma}_{1;[0,T]}\exp\big(4\norm{\gamma}_{1;[0,T]}\big)\max_{1\leq i \leq n}\norm{\gamma}_{1;[t_i,t_{i+1}]}.
    \end{equation}
\end{theorem}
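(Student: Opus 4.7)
The plan is to express both $K_\gamma(0,T)$ and $K_{\gamma^\pi}(0,T)$ as series in (discrete or continuous) iterated integrals of $\gamma$ contracted against the same free-semicircular moments $\varphi(\bm{I})$, and then compare term by term. For $K_\gamma(0,T)$ this expansion is already given by \cref{eq: expansion}. For $K_{\gamma^\pi}(0,T)$ I would first observe that the explicit scheme in \cref{eq: explicit_scheme} is exactly \cref{eq: functional_eq} driven by the c\`adl\`ag piecewise constant path $\gamma^\pi$ with the left-point Riemann--Stieltjes convention; repeating the argument of \cref{thm: fpcf_eq} in this setting (or equivalently rerunning the proof of \cref{lem: func_unique}) gives
\begin{equation*}
K_{\gamma^\pi}(0,T)=\sum_{|\bm{I}|=0}^\infty i^{|\bm{I}|}\varphi(\bm{I})\,\SS_{0,T}^{\bm{I}}(\gamma^\pi),
\end{equation*}
where $\SS_{0,T}^{\bm{I}}(\gamma^\pi)$ is the iterated sums signature of $\gamma$ over the partition $\pi$, in the sense of \cite{KO}. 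By the triangle inequality,
\begin{equation*}
\bigl|K_\gamma(0,T)-K_{\gamma^\pi}(0,T)\bigr|\;\leq\;\sum_{k=0}^\infty\sum_{|\bm{I}|=k}|\varphi(\bm{I})|\,\bigl|\SS_{0,T}^{\bm{I}}(\gamma)-\SS_{0,T}^{\bm{I}}(\gamma^\pi)\bigr|.
\end{equation*}

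Next I would insert two standard pointwise bounds. First, $\varphi$ vanishes on words of odd length and satisfies $|\varphi(\bm{I})|\leq C_{k/2}$, the $(k/2)$-th Catalan number, so $\sum_{|\bm{I}|=k}|\varphi(\bm{I})|\leq (4d)^{k/2}$ up to harmless constants; this is the same bound used at the end of the proof of \cref{lem: func_unique}. Second, the level-$k$ difference between the signature and the iterated sums signature can be bounded using the standard rate from \cite{KO}, namely
\begin{equation*}
\bigl|\SS_{0,T}^{\bm{I}}(\gamma)-\SS_{0,T}^{\bm{I}}(\gamma^\pi)\bigr|\;\leq\;\frac{k\,\|\gamma\|_{1;[0,T]}^{\,k-1}}{(k-1)!}\,\max_{1\leq i\leq n}\|\gamma\|_{1;[t_i,t_{i+1}]},
\end{equation*}
summed coordinate-wise against $|\varphi(\bm{I})|$. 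Combining these two ingredients, the series on the right-hand side becomes, up to a constant, $\max_i\|\gamma\|_{1;[t_i,t_{i+1}]}$ times $\sum_{k\geq 1}k\,(4\|\gamma\|_{1;[0,T]})^{k-1}/(k-1)!$, which telescopes to a multiple of $\exp(4\|\gamma\|_{1;[0,T]})\,\|\gamma\|_{1;[0,T]}$, giving exactly the claimed estimate
\begin{equation*}
16\,\|\gamma\|_{1;[0,T]}\exp\bigl(4\|\gamma\|_{1;[0,T]}\bigr)\max_{1\leq i\leq n}\|\gamma\|_{1;[t_i,t_{i+1}]}.
\end{equation*}

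The main obstacle is the second step: justifying the series representation for $K_{\gamma^\pi}(0,T)$ rigorously. The derivation of \cref{eq: expansion} in \cref{thm: fpcf_eq} goes through a smooth-path ordered exponential, whereas $\gamma^\pi$ has jumps, so one has to either invoke the c\`adl\`ag theory (as in the reference \cite{FZ_jumps} cited in the definition of the scheme) or, more directly, replicate the iterative expansion argument of \cref{lem: func_unique} with left-point integrals. Once this is in place, the remaining estimates are purely combinatorial and reduce to bookkeeping of Catalan-number and factorial-decay bounds already used in the proof of \cref{lem: func_unique}. The factor of $\max_i\|\gamma\|_{1;[t_i,t_{i+1}]}$ appears naturally because the difference $\SS_{0,T}^{\bm{I}}(\gamma)-\SS_{0,T}^{\bm{I}}(\gamma^\pi)$ can be written, via the Chen identity and a telescoping argument across the partition intervals, as a sum of terms each of which contains one extra increment supported on a single subinterval.
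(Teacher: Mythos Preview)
Your approach is essentially the same as the paper's: expand both $K_\gamma(0,T)$ and $K_{\gamma^\pi}(0,T)$ as $\sum_{|\bm I|}i^{|\bm I|}\varphi(\bm I)\SS^{\bm I}_{0,T}$, subtract, bound $\varphi(\bm I)$ by a Catalan number, and invoke the signature--versus--iterated-sums estimate from \cite{KO}. Two small bookkeeping differences are worth noting. First, the paper avoids the dimension factor you pick up in $\sum_{|\bm I|=k}|\varphi(\bm I)|\leq (4d)^{k/2}$ by instead bounding $\varphi(\bm I)\leq C_m$ uniformly and passing to the tensor norm $\|\SS_{0,T}(\gamma)^m-\SS_{0,T}(\gamma^\pi)^m\|$, so that no sum over words of fixed length is needed. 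Second, the paper quotes the KO bound in the sharper form $\|\SS_{0,T}(\gamma)^m-\SS_{0,T}(\gamma^\pi)^m\|\leq \|\gamma\|_{1}^{m-1}/(m-2)!\cdot\max_i\|\gamma\|_{1;[t_i,t_{i+1}]}$ rather than your $k/(k-1)!$ version; combined with $C_m\leq 4^m$ this is exactly what produces the constant $16$ after summing $\sum_{m\geq 2}4^m\|\gamma\|_1^{m-1}/(m-2)!=16\|\gamma\|_1\exp(4\|\gamma\|_1)$. Your identification of the only nontrivial point---that the discrete scheme admits the same series representation with the iterated-sums signature in place of the continuous one---is correct, and the paper handles it exactly as you suggest, by recording the left-point Riemann--Stieltjes signature of $\gamma^\pi$ as the iterated-sums signature.
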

\begin{proof}
    See \cref{sec: app_a}.
\end{proof}
\begin{remark}
    Note that for any path $\gamma\in\XX$, along any sequence of partitions $\pi_n$, with $|\pi_n|\to0$, it holds that $\lim_{n\to \infty}\max_{1\leq i\leq n}\norm{\gamma}_{1;[t_i,t_{i+1}]}=0$, so that \cref{thm: numerical_conv} guarantees convergence along any sequence of partitions with vanishing mesh size.
\end{remark}
\begin{remark}[Exact Solution for Straight Lines and $d=1$]\label{rem: exact_sol}
    We fix $T=1$ without loss of generality, since the signature is invariant under time reparameterisation. There are two situations in which the solution to \cref{eq: func_eq} is explicit. Firstly, assume that $\gamma$ is a straight line. That is, for $t\in [0,1]$
\begin{equation*}
    \gamma_t = tv, \quad\text{for some } v\in\R^d,
\end{equation*}
so that $\gamma_t^\prime\equiv v$ and $\langle \dif\gamma_s,\dif\gamma_t\rangle = \norm{v}^2\dif s \dif t$ for all $s,t\in [0, 1]$. In this case \cref{eq: func_eq} reduces to
\begin{equation*}
    K_\gamma(s,t)=1-\int_s^t\int_s^tK_\gamma(s, u)K_\gamma(u,r)\norm{v}^2\dif u \dif r,
\end{equation*}
which depends only on $\norm{v}$. Hence, the resulting solution will be the same as for the $\R$-valued path $\sigma_t = t\norm{v}$. Recalling the expansion \cref{eq: expansion}, we see that
\begin{equation*}
    K_\gamma(s,t)=K_\sigma(s,t)=\sum_{k=0}^\infty i^{2k} C_{2k} \frac{\norm{(t-s)v}^{2k}}{(2k)!}=\Ex\Big[\exp\big(iR\norm{(t-s)v}\big)\Big],
\end{equation*}
for a semi-circular random variable $R$ with radius $2$. The characteristic function of a semi-circular random variable is well known to be given by $\frac{J_1(2t)}{t}$, where $J_1(\cdot)$ is a Bessel function of the first kind. It is possible to exploit this explicit nature to fill in the part the solution grid $\Pi$ at any point $(t_i,t_{i+j})$ for which a path $\gamma$ is a straight line on the interval $[t_i,t_{i+j}]$. Finally, the solution is also explicit for any path in $d=1$, since the signature then depends only on the increment $\gamma_1-\gamma_0$.
\end{remark}
\subsection{Implicit Scheme}\label{sec: implicit}
The explicit scheme in \cref{sec: explicit} was based on left-point Riemann-Stieltjes integration. If one instead considers right-point Riemann-Stieltjes integration, one obtains the following implicit scheme. By expanding \cref{eq: func_eq} with right-point integrals, we obtain the following system of equations for $(t_i,t_{i+j})\in\Pi$.
\begin{equation*}
\begin{split}
   K_{\gamma^\pi}^r(t_i,t_{i+j})=1-\sum_{l=1}^{i+j}\sum_{k=1}^lK_{\gamma^\pi}^r(t_i,t_{i+k})K_{\gamma^\pi}^r(t_{i+k}t_{i+l})\langle \Delta_{t_{i+k}}\gamma^\pi,\Delta_{t_{i+l}}\gamma^\pi\rangle
\end{split}
\end{equation*}
with the boundary condition $K
_{\gamma^\pi}^r(t_i,t_i)=1$ for all $t_i\in\pi$. Just as for the explicit scheme, we can exploit the recursive nature of the equations to write
\begin{equation*}
K_{\gamma^\pi}^r(t_i,t_{i+j})=K_{\gamma^\pi}^r(t_{i},t_{i+j-1}) - \sum_{k=1}^jK_{\gamma^\pi}^r(t_i,t_{i+k})\langle \Delta_{t_{i+k}}\gamma^\pi,\Delta_{t_{i+j}}\gamma^\pi\rangle.
\end{equation*}
Noting that $K_{\gamma^\pi}^r(t_i,t_{i+j})$ appears once on the right-hand side, we can re-arrange to express $K_{\gamma^\pi}^r(t_i,t_{i+j})$ as
\begin{equation}
K_{\gamma^\pi}^r(t_i,t_{i+j})=\frac{K_{\gamma^\pi}^r(t_{i},t_{i+j-1}) - \sum\limits_{k=1}^{j-1}K_{\gamma^\pi}^r(t_i,t_{i+k})\langle \Delta_{t_{i+k}}\gamma^\pi,\Delta_{t_{i+j}}\gamma^\pi\rangle}{1+\langle \Delta_{t_{i+j}}\gamma^\pi,\Delta_{t_{i+j}}\gamma^\pi\rangle}
\end{equation}
\begin{remark}
    There are multiple variations on the implicit and explicit schemes to estimate the double integral in \cref{eq: func_eq}. For example, one could implement a Stratonovich style integral, where one averages the integrand at both left and right points. Alternatively, one could write $\langle \dif\gamma_u,\dif\gamma_r\rangle=\langle\gamma^\prime_u,\gamma^\prime_r\rangle \dif u \dif r$, so that the influence of the path $\gamma$ is in the integrand and not the integrator. In practice, these schemes all appear to have the same order of convergence, but with different constants.
\end{remark}
\subsection{Randomised CDEs}\label{sec: random}
We can also consider a randomised CDE approach to approximating $K_\gamma$, just as in \cite{SKlimit} for $K_{\text{sig}}$. Let $\hat{\gamma}^\pi$ be a piecewise linear approximation to $\gamma$ on $\pi$ and let $\big\{Z_{\hat{\gamma}^\pi}^{j,N}\big\}_{j=1}^M$ be independent solutions to \cref{eq: hermit}. Then we define the randomised approximation
\begin{equation}
    \text{r}K_{\hat{\gamma}^\pi}^{M, N}(s,t):=\frac{1}{M}\sum_{j=1}^M\frac{1}{N}\tr{Z_{\hat{\gamma}^\pi}^{j,N}(s,t)}.
\end{equation}
By \cref{thm: fpcf_eq} we have the convergence
\[
\lim_{N\to\infty}\lim_{M\to\infty}\text{r}K_{\hat{\gamma}^\pi}^{M, N}(s,t) = K_{\hat{\gamma}^\pi}(s,t).
\]
We choose the piecewise-linear approximation as then the solutions $Z_{\hat{\gamma}^\pi}^{j,N}$ are explicit and given as the product of matrix exponentials; see \cite[Section 2.2]{PCFGAN} for details.
\subsection{Comparing Numerical Solvers}
Given the stark contrast between the numerical approaches outlined in \cref{sec: explicit,sec: implicit} and the the Monte-Carlo approach of \cref{sec: random}, it is natural to compare their relative benefits and limitations. Of particular interest is the scaling behaviour of the computational time in the length of a path, its dimension, and the hyper-parameters of $\text{r}K^{M,N}$. All experiments were performed on an NVIDIA GeForce RTX 2060 with Max-Q Design. 
\begin{remark}
    This section is merely intended to compare the scaling of computational costs, and does claim superiority of one method over the other.
\end{remark}

In the following, the true underlying path $\gamma$ will be a piecewise-linear interpolation of a fractional Brownian motion with Hurst parameter $H=0.75$ on some uniform partition $\pi$ of $[0,1]$. For the implicit numerical scheme from \cref{sec: implicit}, we examine the convergence to $K_{\gamma}$ for $K_{\gamma^{\pi(\lambda)}}^r$ as $\lambda\to\infty$, where $\pi{(\lambda)}$ is a dyadic refinement of $\pi$ of order $\lambda \in \N$. For $\text{r}K_{\gamma}^{M,N}$, we fix $M=50$ and vary the dimension $N$ of the random matrices. All random matrices will be generated according to the Gaussian unitary ensemble (GUE).
\begin{figure}[H]
\centering
    \begin{subfigure}{.8\textwidth}
    \centering
    \includegraphics[width=1\linewidth]{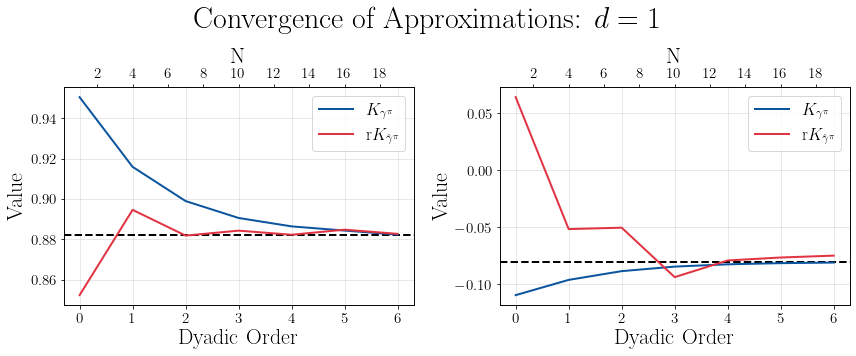}
    \caption{Convergence in $d=1$.}
    \label{fig: 1D_H_0.75}
    \end{subfigure}\\
\begin{subfigure}{.8\textwidth}
\centering
\includegraphics[width=1\linewidth]{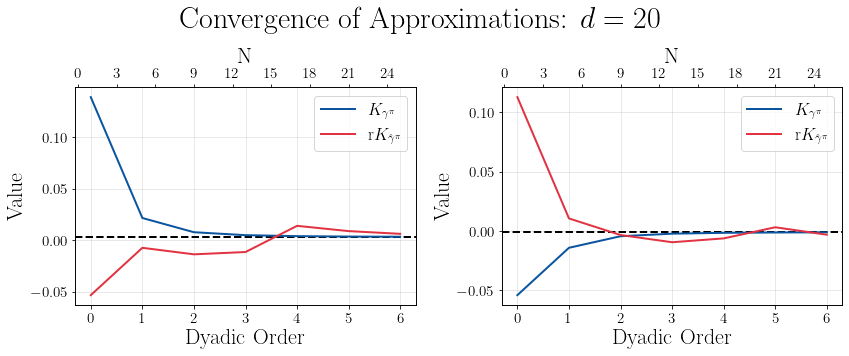}
\caption{Convergence in $d=20$.}
\label{fig: 20D_H_0.75}
\end{subfigure}
\caption{Convergence for the implicit numerical scheme $K_{\gamma^{\pi(\lambda)}}^r$ and the randomised scheme $\text{r}K_{\gamma}^{M,N}$ for $d=1$ and $d=20$. For each dimension, we generate two examples of a piecewise-linear interpolation of fractional Brownian motion with Hurst parameter $H=0.75$ on a uniform partition $\pi$ of $[0,1]$ of size $15$. The blue line corresponds to the lower $\mathrm{x}-$axis, indexed by the dyadic order, and the red line corresponds to the upper $\mathrm{x}-$axis, indexed by the dimension of the matrices $A_i^{M,N}$; the number of Monte-Carlo samples is fixed at $M=50$. The dotted black line is the true value $K_\gamma$, computed in $d=1$ by \cref{rem: exact_sol} and by $\text{r}K_{\gamma}^{125, 450}$ in $d=20$.}
\label{fig: convergence}
\end{figure}
From \cref{sec: implicit}, we know that the computational time for $K_{\gamma^{\pi}}^r$ scales as $\OO(n^2)$ in the size of $\pi$ if computed on a \textrm{GPU}. In contrast, the randomised approach will scale as $\OO(n)$. However, due to the increased computational costs involved (matrix exponentiation, matrix multiplication and Monte-Carlo sampling), it can be more costly to compute $\text{r}K_{\gamma}^{M, N}$. Furthmermore, both $K_{\gamma^{\pi}}^r$ and $\text{r}K_{\gamma}^{M, N}$ scale as $\OO(d)$ in the dimension of $\gamma$, although the constants involved are different. For $K_{\gamma^{\pi}}^r$, $n^2$ dot products of increments in $\R^d$ need to be computed. And for $\text{r}K_{\gamma}^{M, N}$, there are $nd$ scalar-matrix multiplications $\R\times\R^{N\times N}\to \R$.
\begin{figure}[H]
\centering
    \begin{subfigure}{.45\textwidth}
    \centering
    \includegraphics[width=1\linewidth]{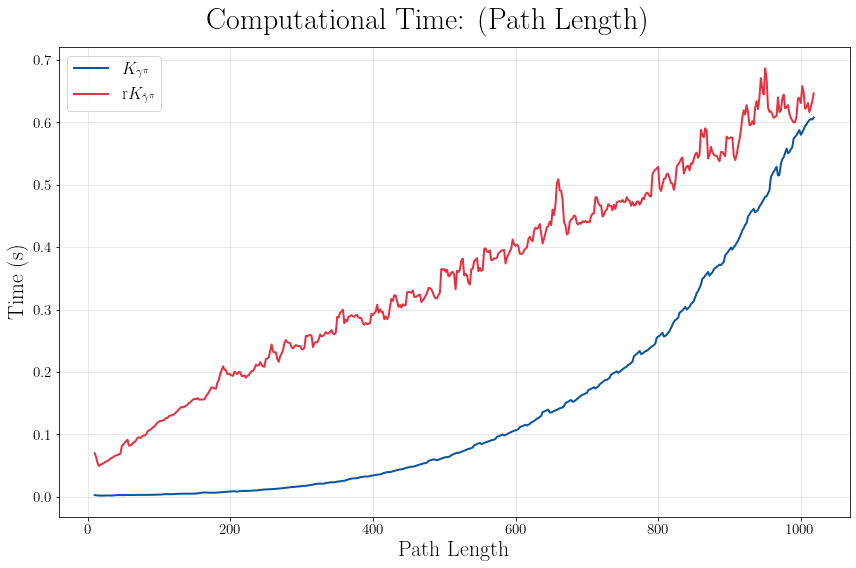}
    \caption{Time vs. path length in $d=10$.}
    \label{fig: compute_n}
    \end{subfigure}
\begin{subfigure}{.45\textwidth}
\centering
\includegraphics[width=1\linewidth]{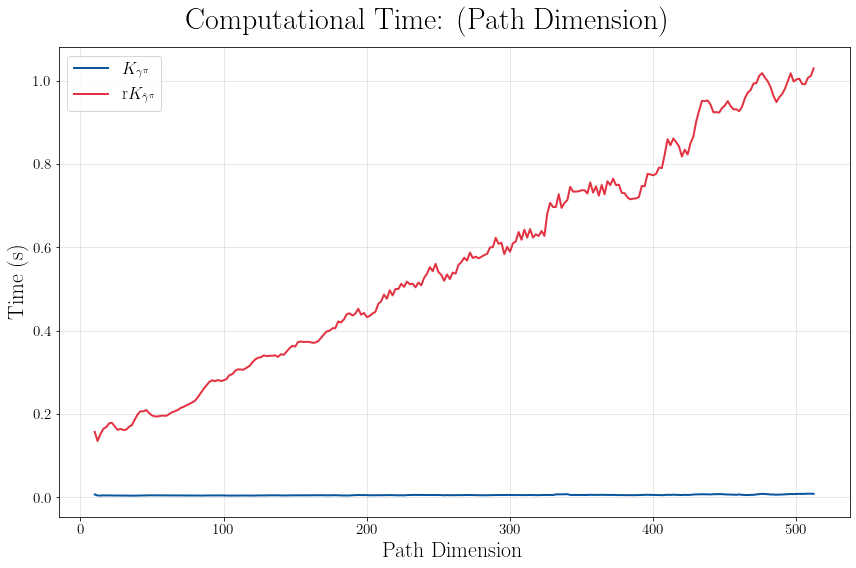}
\caption{Time vs. dimension with $128$ time steps.}
\label{fig: compute_d}
\end{subfigure}
\caption{A comparison of the computational time scaling for the implicit numerical scheme and the randomised scheme when path length and path dimension are varied. The hyper-parameters of $\text{r}K_{\gamma}^{M,N}$ are fixed at $N=20$ and $M=10$.}
\label{fig: compute}
\end{figure}

Finally, we note the impact of the choice of hyper-parameters $M$ and $N$ on the computational time for $\text{r}K_{\gamma}^{M,N}$. Indeed, the computational time is linear in the number of Monte-Carlo samples (without additional parallelisation), but more than quadratic in the dimension $N$, even when leveraging efficient algorithms that exploit \textrm{GPU} parallelisation.
\begin{figure}[H]
    \centering
    \includegraphics[width=.5\textwidth]{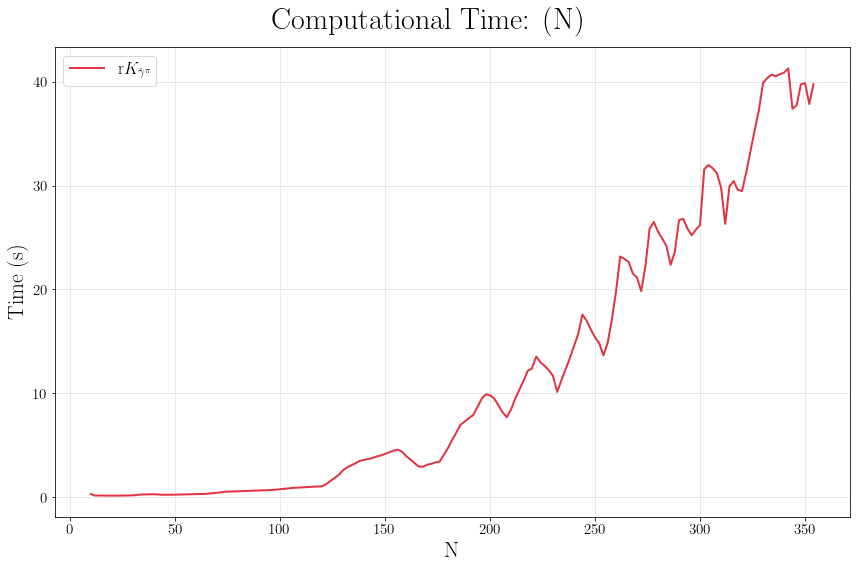}
    \caption{Computational time vs. N for a path with $128$ time steps in $d=10$. The remaining hyper-parameter of $\text{r}K_{\gamma}^{M,\cdot}$ is fixed at $M=2$.}
    \label{fig: compute_N}
\end{figure}
\section*{Funding}
TC has been supported by the EPSRC Programme Grant EP/S026347/1 and acknowledges the support of the Erik Ellentuck Fellowship at the Institute for Advanced Study. WFT has been supported by the EPSRC Centre for Doctoral Training in Mathematics of Random Systems: Analysis, Modelling and Simulation (EP/S023925/1). Both authors acknowledge the support of the Research Institute for Mathematical Sciences, an International Joint Usage/Research Center located in Kyoto  University during a visit in September 2023, when part of this work was conducted.
\clearpage
\printbibliography[heading=bibintoc,title={References}]

@InProceedings{SKlimit,
  title = 	 {Neural signature kernels as infinite-width-depth-limits of controlled {R}es{N}ets},
  author =       {Muca Cirone, Nicola and Lemercier, Maud and Salvi, Cristopher},
  booktitle = 	 {Proceedings of the 40th International Conference on Machine Learning},
  pages = 	 {25358--25425},
  year = 	 {2023},
  volume = 	 {202},
  series = 	 {Proceedings of Machine Learning Research},
  publisher =    {PMLR}
}

@book{RMintro,
series={Cambridge Studies in Advanced Mathematics},
title={An Introduction to Random Matrices},
publisher={Cambridge University Press}, author={Anderson, Greg W. and Guionnet, Alice and Zeitouni, Ofer}, 
year={2009}
}

@inproceedings{
PCFGAN,
title={{PCF}-{GAN}: generating sequential data via the characteristic function of measures on the path space},
author={Hang Lou and Siran Li and Hao Ni},
booktitle={Thirty-seventh Conference on Neural Information Processing Systems},
year={2023}
}

@inproceedings{CFGAN,
  title={Reciprocal adversarial learning via characteristic functions},
  author={Li, S. and Yu, Z. and Xiang, M. and Mandic, D.},
  booktitle={Advances in Neural Information Processing Systems},
  volume={33},
  pages={217--228},
  year={2020}
}

@inproceedings{Ansari2019,
  author={Fatir Ansari, Abdul and Scarlett, Jonathan and Soh, Harold},
  booktitle={2020 IEEE/CVF Conference on Computer Vision and Pattern Recognition (CVPR)}, 
  title={A Characteristic Function Approach to Deep Implicit Generative Modeling}, 
  year={2020},
  volume={},
  number={},
}

@book{TAO_RMT,
author = {Tao, Terence},
address = {Providence, R.I},
booktitle = {Topics in random matrix theory},
isbn = {0821874306},
keywords = {Random matrices},
language = {eng},
lccn = {2011045194},
publisher = {American Mathematical Society},
series = {Graduate studies in mathematics ; 132},
title = {Topics in random matrix theory },
year = {2012},
}

@article {SigPDE,
    AUTHOR = {Salvi, Cristopher and Cass, Thomas and Foster, James and
              Lyons, Terry and Yang, Weixin},
     TITLE = {The signature kernel is the solution of a {G}oursat {PDE}},
   JOURNAL = {SIAM J. Math. Data Sci.},
    VOLUME = {3},
      YEAR = {2021},
    NUMBER = {3},
     PAGES = {873--899},
}

@article {FZ_jumps,
    AUTHOR = {Friz, Peter K. and Zhang, Huilin},
     TITLE = {Differential equations driven by rough paths with jumps},
   JOURNAL = {J. Differential Equations},
    VOLUME = {264},
      YEAR = {2018},
    NUMBER = {10},
     PAGES = {6226--6301}
}

@article {KO,
    AUTHOR = {Kir\'{a}ly, Franz J. and Oberhauser, Harald},
     TITLE = {Kernels for sequentially ordered data},
   JOURNAL = {J. Mach. Learn. Res.},
    VOLUME = {20},
      YEAR = {2019},
     PAGES = {Paper No. 31, 45}
}

@article {ISS,
    AUTHOR = {Diehl, Joscha and Ebrahimi-Fard, Kurusch and Tapia, Nikolas},
     TITLE = {Iterated-sums signature, quasisymmetric functions and time
              series analysis},
   JOURNAL = {S\'{e}m. Lothar. Combin.},
    VOLUME = {84B},
      YEAR = {2020},
     PAGES = {Art. 86, 12},
}

@article {Chen54,
    AUTHOR = {Chen, Kuo-Tsai},
     TITLE = {Iterated integrals and exponential homomorphisms},
   JOURNAL = {Proc. London Math. Soc. (3)},
  FJOURNAL = {Proceedings of the London Mathematical Society. Third Series},
    VOLUME = {4},
      YEAR = {1954},
     PAGES = {502--512},
}

@article {RGinibre,
    AUTHOR = {Dubach, Guillaume and Peled, Yuval},
     TITLE = {On words of non-{H}ermitian random matrices},
   JOURNAL = {Ann. Probab.},
  FJOURNAL = {The Annals of Probability},
    VOLUME = {49},
      YEAR = {2021},
    NUMBER = {4},
     PAGES = {1886--1916},
}

@misc{lou2022path,
      title={Path Development Network with Finite-dimensional Lie Group Representation}, 
      author={Hang Lou and Siran Li and Hao Ni},
      year={2022},
      eprint={2204.00740},
      archivePrefix={arXiv}
}

@article {VoiMatProd,
    AUTHOR = {Voiculescu, Dan},
     TITLE = {Addition of certain noncommuting random variables},
   JOURNAL = {J. Funct. Anal.},
  FJOURNAL = {Journal of Functional Analysis},
    VOLUME = {66},
      YEAR = {1986},
    NUMBER = {3},
     PAGES = {323--346}
}

@incollection {VoiOrig,
    AUTHOR = {Voiculescu, Dan},
     TITLE = {Symmetries of some reduced free product {$C^\ast$}-algebras},
 BOOKTITLE = {Operator algebras and their connections with topology and
              ergodic theory ({B}u\c{s}teni, 1983)},
    SERIES = {Lecture Notes in Math.},
    VOLUME = {1132},
     PAGES = {556--588},
 PUBLISHER = {Springer, Berlin},
      YEAR = {1985}
}

@article {VoiMult,
    AUTHOR = {Voiculescu, Dan},
     TITLE = {Multiplication of certain noncommuting random variables},
   JOURNAL = {J. Operator Theory},
  FJOURNAL = {Journal of Operator Theory},
    VOLUME = {18},
      YEAR = {1987},
    NUMBER = {2},
     PAGES = {223--235}
}

@misc{issa2023nonadversarial,
      title={Non-adversarial training of Neural SDEs with signature kernel scores}, 
      author={Zacharia Issa and Blanka Horvath and Maud Lemercier and Cristopher Salvi},
      year={2023},
      eprint={2305.16274},
      archivePrefix={arXiv}
}

@book {Lyons_SF,
    AUTHOR = {Lyons, Terry J. and Caruana, Michael and L\'{e}vy, Thierry},
     TITLE = {Differential equations driven by rough paths},
    SERIES = {Lecture Notes in Mathematics},
    VOLUME = {1908},
      NOTE = {Lectures from the 34th Summer School on Probability Theory
              held in Saint-Flour},
 PUBLISHER = {Springer, Berlin},
      YEAR = {2007},
     PAGES = {xviii+109}
}

@misc{LLN,
      title={Learning from the past, predicting the statistics for the future, learning an evolving system}, 
      author={Daniel Levin and Terry Lyons and Hao Ni},
      year={2016},
      eprint={1309.0260},
      archivePrefix={arXiv}
}

@INPROCEEDINGS{Handwriting,
  author={Yang, W. and Jin, L. and Hao Ni and Lyons, T.},
  booktitle={2016 23rd International Conference on Pattern Recognition (ICPR)}, 
  title={Rotation-free online handwritten character recognition using dyadic path signature features, hanging normalization, and deep neural network}, 
  year={2016},
  pages={4083--4088}}

@inproceedings{DeepSig,
  title={Deep Signature Transforms},
  author={Patrick Kidger and Patrick Kidger and Imanol Perez Arribas and Cristopher Salvi and Terry Lyons},
  booktitle={Neural Information Processing Systems},
  year={2019}
}

@incollection {SigSkel,
    AUTHOR = {Yang, Weixin and Lyons, Terry and Ni, Hao and Schmid, Cordelia
              and Jin, Lianwen},
     TITLE = {Developing the path signature methodology and its application
              to landmark-based human action recognition},
 BOOKTITLE = {Stochastic analysis, filtering, and stochastic optimization},
     PAGES = {431--464},
 PUBLISHER = {Springer, Cham},
      YEAR = {2022}
}

@article {CL,
    AUTHOR = {Chevyrev, Ilya and Lyons, Terry},
     TITLE = {Characteristic functions of measures on geometric rough paths},
   JOURNAL = {Ann. Probab.},
  FJOURNAL = {The Annals of Probability},
    VOLUME = {44},
      YEAR = {2016},
    NUMBER = {6},
     PAGES = {4049--4082}
}

@inproceedings{ImageSIG,
  title={ImageSig: A signature transform for ultra-lightweight image recognition},
  author={Ibrahim, Mohamed R and Lyons, Terry},
  booktitle={Proceedings of the IEEE/CVF Conference on Computer Vision and Pattern Recognition},
  pages={3649--3659},
  year={2022}
}

@inproceedings{SigWasserstein,
  title={Sig-Wasserstein GANs for time series generation},
  author={Ni, Hao and Szpruch, Lukasz and Sabate-Vidales, Marc and Xiao, Baoren and Wiese, Magnus and Liao, Shujian},
  booktitle={Proceedings of the Second ACM International Conference on AI in Finance},
  pages={1--8},
  year={2021}
}

@article{CO,
  title={Signature moments to characterize laws of stochastic processes},
  author={Chevyrev, Ilya and Oberhauser, Harald},
  journal={The Journal of Machine Learning Research},
  volume={23},
  number={1},
  pages={7928--7969},
  year={2022},
  publisher={JMLRORG}
}

@book {Vershynin,
    AUTHOR = {Vershynin, Roman},
     TITLE = {High-dimensional probability},
    SERIES = {Cambridge Series in Statistical and Probabilistic Mathematics},
    VOLUME = {47},
 PUBLISHER = {Cambridge University Press, Cambridge},
      YEAR = {2018},
     PAGES = {xiv+284},
}

@inproceedings{NTK_RMT,
  title={The neural tangent kernel in high dimensions: Triple descent and a multi-scale theory of generalization},
  author={Adlam, Ben and Pennington, Jeffrey},
  booktitle={International Conference on Machine Learning},
  pages={74--84},
  year={2020},
  organization={PMLR}
}

@article{Eval_nonlinear,
  title={Eigenvalue distribution of some nonlinear models of random matrices},
  author={Benigni, Lucas and P{\'e}ch{\'e}, Sandrine},
  journal={Electronic Journal of Probability},
  volume={26},
  pages={1--37},
  year={2021},
  publisher={The Institute of Mathematical Statistics and the Bernoulli Society}
}

@article{pennington2017resurrecting,
  title={Resurrecting the sigmoid in deep learning through dynamical isometry: theory and practice},
  author={Pennington, Jeffrey and Schoenholz, Samuel and Ganguli, Surya},
  journal={Advances in neural information processing systems},
  volume={30},
  year={2017}
}

@misc{wang2023deformed,
      title={Deformed semicircle law and concentration of nonlinear random matrices for ultra-wide neural networks}, 
      author={Zhichao Wang and Yizhe Zhu},
      year={2023},
      eprint={2109.09304},
      archivePrefix={arXiv},
      primaryClass={math.ST}
}

@article{RMT_DL_summary,
  title={Large-dimensional random matrix theory and its applications in deep learning and wireless communications},
  author={Ge, Jungang and Liang, Ying-Chang and Bai, Zhidong and Pan, Guangming},
  journal={Random Matrices: Theory and Applications},
  volume={10},
  number={04},
  pages={2230001},
  year={2021},
  publisher={World Scientific}
}

@article {WSK,
    AUTHOR = {Cass, Thomas and Lyons, Terry and Xu, Xingcheng},
     TITLE = {Weighted signature kernels},
   JOURNAL = {Ann. Appl. Probab.},
  FJOURNAL = {The Annals of Applied Probability},
    VOLUME = {34},
      YEAR = {2024},
    NUMBER = {1A},
     PAGES = {585--626}
}
\clearpage
\appendix
\section{Proof of Convergence of Numerical Scheme}\label{sec: app_a}
In this section we prove \cref{thm: numerical_conv}. We follow the method of \cite[Appendix A.3]{KO}, wherein the authors prove convergence of numerical schemes for the ordinary signature kernel based on piecewise constant approximations. Throughout, let $\gamma\in\XX$ and let $\pi=\{0=t_1<\dots<t_n<t_{n+1}=T\}$ be a partition of $[0,T]$, with $\gamma^\pi$ defined by
\[
\gamma_t^\pi=\sum_{i=1}^n\id_{[t_i,t_{i+1})}\gamma_{t_i}.
\]
We interpret all integrals with respect to $\gamma^\pi$ as left-point Riemann-Stieltjes integrals.
\begin{lemma}
The signature of $\gamma^\pi$ is well defined and given by
\begin{equation}\label{eq: ISS}
\SS_{0,T}\big(\gamma^\pi\big)^m = \sum_{1\leq i_1<\dots< i_m\leq n}\big(\Delta_{t_{i_1+1}}\gamma^\pi\big)\otimes\dots\otimes\big(\Delta_{t_{i_m}+1}\gamma^\pi\big).
\end{equation}
\end{lemma}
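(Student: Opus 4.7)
The plan is to proceed by induction on $m$, exploiting the recursive definition of iterated integrals together with the left-point Riemann-Stieltjes convention introduced at the start of \cref{sec: explicit}. The base case $m = 1$ is immediate since
\[
\SS_{0,T}(\gamma^\pi)^1 = \int_0^T \dif\gamma^\pi_s = \gamma^\pi_T - \gamma^\pi_0 = \sum_{i=1}^n \Delta_{t_{i+1}}\gamma^\pi,
\]
and the case $m=0$ holds by definition.

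For the inductive step, I would invoke the standard recursion
\[
\SS_{0,T}(\gamma^\pi)^m = \int_0^T \SS_{0,s}(\gamma^\pi)^{m-1} \otimes \dif\gamma^\pi_s,
\]
and observe that, since $\gamma^\pi$ is a piecewise constant c\`adl\`ag function of finite variation with jumps located at the partition points $t_2, \ldots, t_{n+1}$, the left-point Riemann-Stieltjes integral collapses to the finite sum
\[
\sum_{j=1}^n \SS_{0,t_{j+1}-}(\gamma^\pi)^{m-1} \otimes \Delta_{t_{j+1}}\gamma^\pi.
\]
Applying the inductive hypothesis to the restriction of $\gamma^\pi$ to $[0, t_j]$ (noting that $\gamma^\pi_{t_{j+1}-} = \gamma^\pi_{t_j}$) yields a sum indexed by $1 \leq i_1 < \cdots < i_{m-1} \leq j - 1$. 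Setting $i_m = j$ and collecting terms reproduces \cref{eq: ISS}. The key bookkeeping point, and the only place where the convention enters essentially, is that strict inequality $i_{m-1} < i_m$ is enforced: the jump at $t_{j+1}$ is invisible to the inner integral up to $t_{j+1}-$, so it cannot reappear at the outer integration level.

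Well-definedness of each homogeneous component then follows from the factorial bound
\[
\norm{\SS_{0,T}(\gamma^\pi)^m} \leq \sum_{1 \leq i_1 < \cdots < i_m \leq n} \prod_{k=1}^m \norm{\Delta_{t_{i_k+1}}\gamma^\pi} \leq \frac{\norm{\gamma^\pi}_{1;[0,T]}^m}{m!} \leq \frac{\norm{\gamma}_{1;[0,T]}^m}{m!},
\]
using that the $1$-variation of the step function $\gamma^\pi$ equals the Riemann sum $\sum_{i=1}^n \norm{\gamma_{t_{i+1}} - \gamma_{t_i}}$, which is bounded above by $\norm{\gamma}_{1;[0,T]}$. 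Hence each level lies in $V^{\otimes m}$ and the full signature in $T((V))$.

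The main obstacle I foresee is purely of a notational nature: carefully justifying the reduction from the simplex-based iterated integral against a c\`adl\`ag integrator to the discrete nested sum over partition indices under the left-point convention, in particular ruling out ``diagonal'' contributions $s_{k-1} = s_k$ that would break strict ordering. This is precisely what the jump calculus developed in \cite{FZ_jumps} (cited at the beginning of \cref{sec: explicit}) provides, so once that foundation is invoked the induction itself is routine.
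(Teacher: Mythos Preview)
Your inductive argument is correct and complete. The paper itself does not prove this lemma: it is stated without proof in the appendix, followed only by a remark pointing to \cite{ISS,KO} for the algebraic and analytic properties of the iterated-sums signature. Your proposal therefore supplies strictly more than the paper does. The induction on $m$, reducing the left-point Riemann--Stieltjes integral against a pure-jump c\`adl\`ag integrator to a finite sum over jump times and then invoking the hypothesis on the restricted partition, is the standard way to establish this identity from first principles, and your handling of the strict-ordering constraint---the jump at $t_{j+1}$ does not contribute to the inner integral on $[0,t_{j+1}-)$, so $i_{m-1}\le j-1<j=i_m$---is precisely what makes the left-point convention yield strictly increasing indices. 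The factorial bound you give for well-definedness is likewise correct and more explicit than anything in the paper.
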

\begin{remark}
    The object on the right-hand side of \cref{eq: ISS} is often referred to as the iterated-sums or discrete-time signature of $\gamma^\pi$. For details on the algebraic and analytic properties of the iterated-sums signature, we refer the reader to \cite{ISS,KO}.
\end{remark}
The following bound on the difference between the $m$\textsuperscript{th} term of the signature of $\gamma$ and $\gamma^\pi$ may be deduced from a careful reading of \cite[Lemma A.4 and Proposition A.5]{KO}.
\begin{lemma}\label{lem: KO_bound}
    For all $m\geq 2$, it holds that
    \begin{equation}
        \norm{\SS_{0,T}\big(\gamma\big)^m-\SS_{0,T}\big(\gamma^\pi\big)^m}\leq \frac{\norm{\gamma}_{1;[0,T]}^{m-1}}{(m-2)!}\max_{1\leq i \leq n}\norm{\gamma}_{1;[t_i,t_{i+1}]}.
    \end{equation}
\end{lemma}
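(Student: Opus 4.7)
My plan is to exploit the multiplicative structure of the signature. Chen's identity gives $\SS_{0,T}(\gamma) = \bigotimes_{i=1}^n \SS_{t_i,t_{i+1}}(\gamma)$, while the iterated-sums formula recalled in the preceding lemma is equivalent to $\SS_{0,T}(\gamma^\pi) = \bigotimes_{i=1}^n (1 + \Delta_i)$, where $\Delta_i := \Delta_{t_{i+1}}\gamma^\pi = \gamma_{t_{i+1}} - \gamma_{t_i}$: under left-point integration the piecewise-constant path $\gamma^\pi$ only contributes through its jump on each subinterval. The ``gap'' $g_i := \SS_{t_i,t_{i+1}}(\gamma) - (1 + \Delta_i)$ has vanishing level-$0$ and level-$1$ components, so its first non-trivial tensor level is $2$, with the standard factorial bound $\|g_i^k\| \leq \ell_i^k/k!$ for $k \geq 2$, where $\ell_i := \|\gamma\|_{1;[t_i,t_{i+1}]}$.

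The central estimate then comes from the telescoping identity $\bigotimes_i a_i - \bigotimes_i b_i = \sum_{i=1}^n \bigotimes_{j<i} b_j \otimes (a_i - b_i) \otimes \bigotimes_{j>i} a_j$, applied with $a_i = \SS_{t_i,t_{i+1}}(\gamma)$ and $b_i = 1 + \Delta_i$. Projecting onto level $m$ of the tensor algebra yields
\begin{equation*}
\bigl[\SS_{0,T}(\gamma) - \SS_{0,T}(\gamma^\pi)\bigr]^m = \sum_{i=1}^n \sum_{\substack{a+b+c = m \\ b \geq 2}} \Bigl[\bigotimes_{j<i}(1+\Delta_j)\Bigr]^a \otimes g_i^b \otimes \Bigl[\bigotimes_{j>i} \SS_{t_j,t_{j+1}}(\gamma)\Bigr]^c,
\end{equation*}
the restriction $b \geq 2$ being precisely the consequence of $g_i$ having no level-$0$ or level-$1$ part.

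I would then apply the factorial decay of iterated integrals to each of the three tensor factors. Writing $L := \|\gamma\|_{1;[0,T]}$ and $\delta := \max_i \ell_i$, the left and right factors contribute at most $L^a/a!$ and $L^c/c!$ respectively, while the middle factor is bounded by $\ell_i^b/b!$. Since $b \geq 1$ we may extract a factor of $\delta$ via $\ell_i^b \leq \delta^{b-1}\ell_i$; summing over $i$ using $\sum_i \ell_i \leq L$ produces an overall factor of $L$, reducing the problem to estimating a multinomial sum of the form $\sum_{a+b+c=m,\, b\geq 2} L^{a+c}\delta^{b-1}/(a!\,b!\,c!)$.

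The main obstacle is the final combinatorial bookkeeping: a naive multinomial estimate introduces spurious $3^m$-type constants, and to achieve the tight $(m-2)!$ denominator one must regroup terms according to the ``run structure'' of defective tuples and apply the elementary-symmetric-polynomial bound $e_p(\ell_1,\dots,\ell_n) \leq L^p/p!$ in place of the cruder $L^p/p!$ obtained by ignoring the strict ordering of the $j_l$'s. Equivalently, writing $\SS_{0,T}(\gamma)^m$ by splitting the ordered $m$-simplex into blocks indexed by which partition intervals each coordinate lies in, the ``diagonal'' (all indices distinct) reproduces $\SS_{0,T}(\gamma^\pi)^m$ while each off-diagonal block carries an extra factor of $\ell_{j}^{r-1}$ with $r\geq 2$, hence at least one factor of $\delta$. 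This tightening is exactly what is carried out in \cite[Lemma A.4 and Proposition A.5]{KO} for the iterated-sums signature in the scalar-kernel setting, and the present bound follows by reading off the tensor-valued version of those estimates.
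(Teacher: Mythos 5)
Your proposal is correct and, like the paper, ultimately rests on \cite{KO}[Lemma A.4 and Proposition A.5] for the sharp $(m-2)!$ constant: your telescoping of $\bigotimes_i \SS_{t_i,t_{i+1}}(\gamma)-\bigotimes_i(1+\Delta_i)$ over the first ``defective'' factor is precisely the disjoint decomposition of the bad region of the $m$-simplex (where some partition interval contains at least two integration variables) that the paper's own sketch describes. You also correctly flag that the crude term-by-term multinomial bound overshoots the stated constant and that the necessary refinement is exactly the content of the cited estimates --- which is no more than the paper's proof itself supplies.
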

\begin{proof}[Sketch of Proof]
    By \cite[Lemma A.4]{KO}, the difference $\SS_{0,T}\big(\gamma\big)^m-\SS_{0,T}\big(\gamma^\pi\big)^m$ is the $m$-\textsuperscript{th} iterated integral of $\gamma$ over the subset of the $m$-simplex where the projections of at least two coordinates lie in the same interval $[t_i,t_{i+1}]\in \pi$. By dividing this subset into disjoint regions, the authors of \cite{KO} obtain a bound on the difference in terms of the $1$-variation norm of $\gamma$ over each sub-interval $[t_i,t_{i+1}]$. By writing $\norm{\gamma}_{1;[0,T]}=\sum_{i=1}^n\norm{\gamma}_{1;[t_i,t_{i+1}]}$, it is then shown in \cite[Proposition A.5]{KO} that the this bound on the difference is smaller than $\norm{\gamma}_{1;[0,t]}\max_{1\leq i \leq n}\norm{\gamma}_{1;[t_i,t_{i+1}]}$ multiplied by the $(m-2)$\textsuperscript{th} term in the Taylor expansion of $\exp\big(\norm{\gamma}_{1;[0,T]}\big)$.
    \end{proof}
\begin{corollary}
    For any $\gamma\in\XX$ and $\pi$, we have the bound
    \[
    \left|K_{\gamma}(0,T) - K_{\gamma^\pi}(0, T)\right|\leq 16\norm{\gamma}_{1;[0,T]}\exp\big(4\norm{\gamma}_{1;[0,T]}\big)\max_{1\leq i \leq n}\norm{\gamma}_{1;[t_i,t_{i+1}]}.
    \]
\end{corollary}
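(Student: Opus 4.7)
My plan is to combine the power series expansion from \cref{eq: expansion} with the stability estimate of \cref{lem: KO_bound}. Applying \cref{eq: expansion} to both $\gamma$ and $\gamma^\pi$ and subtracting gives
\[
K_\gamma(0,T) - K_{\gamma^\pi}(0,T) = \sum_{m\geq 2} i^m\sum_{|\bm{I}|=m}\varphi(X_{\bm{I}})\bigl[\SS^{\bm{I}}_{0,T}(\gamma) - \SS^{\bm{I}}_{0,T}(\gamma^\pi)\bigr],
\]
since the $m=0$ contributions cancel and the $m=1$ contribution vanishes because $\varphi(X_i)=0$ for free semicirculars; moreover only even $m$ contribute, as $\varphi(X_{\bm{I}})=0$ on odd-length words.

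Next I would bound the inner sum for each even $m$. Combinatorially, $\varphi(X_{\bm{I}})$ counts the non-crossing pair partitions of $\{1,\ldots,m\}$ compatible with the word $\bm{I}$, so $|\varphi(X_{\bm{I}})|\leq C_{m/2}\leq 4^{m/2}$, where $C_k$ is the $k$-th Catalan number. Writing $D_m$ for the tensor of signature differences $\SS^m_{0,T}(\gamma)-\SS^m_{0,T}(\gamma^\pi)$ and $T_m$ for the moment tensor $(\varphi(X_{\bm{I}}))_{|\bm{I}|=m}$, the inner sum is a pairing $\langle T_m, D_m\rangle$ and the key estimate I need is
\[
|\langle T_m, D_m\rangle|\leq 4^{m/2}\,\|D_m\|
\]
in the cross-norm for which \cref{lem: KO_bound} applies. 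Combining this with \cref{lem: KO_bound} gives
\[
|K_\gamma(0,T)-K_{\gamma^\pi}(0,T)|\leq\max_{1\leq i\leq n}\norm{\gamma}_{1;[t_i,t_{i+1}]}\sum_{m\geq 2}\frac{4^{m/2}\norm{\gamma}_{1;[0,T]}^{m-1}}{(m-2)!}.
\]
Reindexing, the right-hand sum is a shifted Taylor series for the exponential, which I would bound with slack by $16\norm{\gamma}_{1;[0,T]}\exp(4\norm{\gamma}_{1;[0,T]})$ to match the stated constants.

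The main obstacle is the dimension-free contraction bound $|\langle T_m,D_m\rangle|\leq 4^{m/2}\|D_m\|$. A direct Cauchy--Schwarz in Hilbert--Schmidt norm would be fatal: $\|T_m\|_{\mathrm{HS}}$ grows in the ambient dimension $d$, introducing a spurious $d^{m/2}$ factor. The resolution is an injective/projective tensor-norm duality: the KO argument underlying \cref{lem: KO_bound} in fact controls the projective norm of $D_m$ (its proof decomposes $D_m$ into rank-one pieces bounded by products of $1$-variations), while the moment tensor $T_m$ has injective norm at most $4^{m/2}$. The latter can be seen by viewing $T_m$ as the multilinear form $(v_1,\ldots,v_m)\mapsto\varphi(\Phi(v_1)\cdots\Phi(v_m))$, where $\Phi(v)=\sum_j v^j X_j$, and using $\|\Phi(v)\|_{\mathrm{op}}=2\|v\|$ in the free $C^{\ast}$-algebra generated by the $X_j$ (or, more crudely, directly from the Catalan bound above). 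This duality is the only non-routine ingredient; the rest of the argument is a routine Taylor-series estimate, and the $\max_i\norm{\gamma}_{1;[t_i,t_{i+1}]}$ factor tends to $0$ along any sequence of partitions with vanishing mesh.
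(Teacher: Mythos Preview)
Your approach is essentially the paper's: expand $K_\gamma-K_{\gamma^\pi}$ via \cref{eq: expansion}, bound the free moments by Catalan numbers, apply \cref{lem: KO_bound} termwise, and sum the shifted exponential series. The paper does this in four lines, simply writing $\sum_{|\bm{I}|=m}\varphi(\bm{I})\,|\SS^{\bm I}(\gamma)-\SS^{\bm I}(\gamma^\pi)|\leq C_m\,\|\SS(\gamma)^m-\SS(\gamma^\pi)^m\|$ without further comment on the norm, and using the cruder bound $C_m\leq 4^m$ rather than your $C_{m/2}\leq 4^{m/2}$; your injective/projective duality argument makes explicit a point the paper leaves implicit (the KO estimate really does control the projective norm), and your tighter Catalan bound explains why you arrive at the stated constants only ``with slack''.
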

\begin{proof}
    By the triangle inequality and \cref{lem: KO_bound}, we see that
    \begin{align*}
        \left|K_{\gamma}(0,T) - K_{\gamma^\pi}(0, T)\right|&= \left|\sum_{|\bm{I}|=0}^\infty (-1)^{\frac{|\bm{I}|}{2}}\varphi(\bm{I})(\gamma^\pi)\big(\SS_{0,T}^{\bm{I}}(\gamma)-\SS_{0,T}^{\bm{I}}(\gamma^{\pi})\big)\right|\\
        &\leq \sum_{|\bm{I}|=2}^\infty\varphi(\bm{I})\left|\SS_{0,T}^{\bm{I}}(\gamma)-\SS_{0,T}^{\bm{I}}(\gamma^{\pi})\right|\\
        &\leq  \sum_{m=2}^\infty C_m\norm{\SS_{0,T}\big(\gamma\big)^m-\SS_{0,T}\big(\gamma^\pi\big)^m}\\
        &\leq \sum_{m=2}^\infty 4^m\frac{\norm{\gamma}_{1;[0,T]}^{m-1}}{(m-2)!}\max_{1\leq i \leq n}\norm{\gamma}_{1;[t_i,t_{i+1}]}\\
        &\leq 16\norm{\gamma}_{1;[0,T]}\max_{1\leq i \leq n}\norm{\gamma}_{1;[t_i,t_{i+1}]}\sum_{m=0}^\infty \frac{\big(4\norm{\gamma}_{1;[0,T]}\big)^m}{m!}\\
        &=16\norm{\gamma}_{1;[0,T]}\exp\big(4\norm{\gamma}_{1;[0,T]}\big)\max_{1\leq i \leq n}\norm{\gamma}_{1;[t_i,t_{i+1}]}
    \end{align*}
\end{proof}

\end{document}